\newtheorem{theorem}{Theorem}
\newtheorem{lemma}[theorem]{Lemma}
\newtheorem{proposition}[theorem]{Proposition}
\newtheorem{definition}[theorem]{Definition}
\newtheorem{remark}[theorem]{Remark}
\newtheorem{corollary}[theorem]{Corollary}
\title[Images of polynomials on null-filiform Leibniz algebras]{Polynomial identities and images of polynomials on null-filiform Leibniz algebras}
\author[T. C. de Mello]{Thiago Castilho de Mello}
\address{Universidade Federal de S\~ao Paulo, Instituto de Ci\^encia e Tecnologia}
\email{tcmello@unifesp.br}
\author[M. S. Souza]{Manuela da Silva Souza}\address{Universidade Federal da Bahia, Instituto de Matem\'atica e Estat\'istica}
\email{manuela.souza@ufba.br}
\keywords{Images of polynomials on algebras, Leibniz algebras, null-filiform Leibniz algebras, polynomial identities, L'vov-Kaplansky conjecture}\subjclass[2020]{17A30, 17A32}
\begin{document}

\begin{abstract}
    In this paper we study identities and images of polynomials on null-filiform Leibniz algebras. If $L_n$ is an $n$-dimensional null-filiform Leibniz algebra, we exhibit a finite minimal basis for $\mbox{Id}(L_n)$, the polynomial identities of $L_n$, and we explicitly compute the images of multihomogeneous polynomials on $L_n$. We present necessary and sufficient conditions for the image of a multihomogeneous polynomial $f$ to be a subspace of $L_n$.  For the particular case of multilinear polynomials, we prove that the image is always a vector space, showing that the analogue of the L'vov-Kaplansky conjecture holds for $L_n$. We also prove similar results for an analog of null-filiform Leibniz  algebras in the infinite-dimensional case. 

\end{abstract}

\maketitle

\section{Introduction}
    
    Let $K$ be a field and $X=\{x_1, x_2, \dots\}$ be a countable infinite set. By $K\{X\}$ we denote the free (nonassociative) $K$-algebra freely generated by $X$. We refer to the elements  of $K\{X\}$ as polynomials. If $A$ is an algebra and $f=f(x_1, \dots, x_m)$ is a polynomial, it defines a map (also denoted by $f$)
\[\begin{array}{cccc}
f: & A^m & \longrightarrow & A\\
  & (a_1, \dots, a_m) & \longmapsto & f(a_1, \dots, a_m)
\end{array}\] by evaluation of variables of $f$ on elements of $A$. One may ask what is the image of a given polynomial, or which subsets of $A$ are the image of some polynomial in $K\{ X \}$.

In recent years, images of polynomials have been studied by several authors in different contexts, mostly motivated by the famous open problem known as \emph{L’vov-Kaplansky Conjecture}. Such conjecture asserts that  the image of a multilinear polynomial on the algebra of $n\times n$ matrices is a vector space. In this case, it must be one of the following subspaces: the full matrix algebra, the set of traceless matrices, the set of scalar matrices, or the set $\{0\}$. 

The polynomials whose image is $\{0\}$ are called \emph{polynomial identities} of $A$ and the ones whose image is the set of scalar matrices are called \emph{central polynomials}. The theory of polynomial identities in algebras is a well developed theory and is an important tool in studying images of polynomials. We refer the reader to \cite{Drensky, 4Russos, Bahturin} for good references on polynomial identities in associative and non-associative algebras.

The L'vov-Kaplansky conjecture has a solution  only for $n=2$ \cite{K-BMR} or for $m=2$ (which is a consequence of an old result of Shoda \cite{Shoda} and Albert and Muckenhoupt \cite{AM}, which asserts that any trace zero matrix is a commutator. A simpler proof of such result can be found in \cite{Trace0}).

One can ask if the above conjecture is true for an arbitrary algebra, but it is not difficult to find examples where it fails. For example, the image of the multilinear polynomial $f(x,y)=[x,y]:=xy-yx$ (commutator) on the Grassmann algebra of an infinite-dimensional vector space is not a subspace. So it is interesting to delimit the scope of such a generalized conjecture.
Recent papers have shown that the conjecture is true for several examples of simple algebras: the algebra of real quaternions \cite{MalevQ}, the algebra of real octonions \cite{MalevO}, the Jordan algebra of a non-degenerated bilinear form \cite{MalevJ}. Also, in \cite{VitasSurjective} the author proves that if $A$ is an associative unital algebra with a surjective inner derivation, then the image of $f$ on $A$ is $A$. In particular the image of a multilinear polynomial evaluated on the (simple) algebra of endomorphisms of an infinite-dimensional vector space over $K$ and on the (simple) $n$-th Weyl  algebras is a vector subspace.  For Lie algebras, a solution is known for the Lie algebras $\mathfrak{su}(n)$ and $\mathfrak{so}(n)$ for polynomials of degree up to 4 \cite{Anzis}. 
We refer the reader to \cite{survey} for a comprehensive survey of this line of investigation.

Although the L'vov-Kaplansky conjecture is not true for algebras in general, there are known positive results for non-simple algebras. An important example is the class of algebras of upper triangular matrices over a sufficiently large field \cite{GargatedeMello, Wang_nxn}. Also the same is true for the algebra of $n\times n$ strictly upper triangular matrices (and also for its powers) \cite{Fagundes}. It is worth mentioning that the later algebra is nilpotent. In \cite{LieNilpotent} the authors describe the image of multilinear Lie polynomials of degree 2 evaluated
on a finite-dimensional nilpotent Lie algebra $L$  with $\dim L' \leq 4$.

In this paper, we address the problem of describing the image of a multihomogeneous polynomial evaluated on a special type of nilpotent Leibniz algebras. 

Leibniz algebras are a ``non-commutative" analogue of Lie algebras.
Such algebras have been intensively studied in the last three decades. Mainly, researchers aimed in extending results of Lie algebras to Leibniz algebras, and it has been shown that many important results on Lie algebras have an analogue on Leibniz algebras, for instance an analogue of Engel's Theorem (see \cite{Omirov_Engel}) and a weaker version of Levi's Theorem (see \cite{Barnes_Levi}) hold for  Leibniz algebras. 

The Leibniz algebras we are interested in this papers are the so called \emph{null-filiform} Leibniz algebra. An $n$-dimensional Leibniz algebra $L$ is called null-filiform if for any $1\leq i \leq n+1$, $\dim L^i = (n+1)-i$. In particular, such algebras have maximal nilpotent index.

In this paper we completely solve the problem of describing the image of an arbitrary multihomogeneous polynomial evaluated on an $n$-dimensional null-filiform Leibniz algebra. We present necessary and sufficient conditions for the image of a multihomogeneous polynomial $f$ to be a subspace of $L_n$.  
In particular, we show that the image of a multilinear polynomial is always a vector subspace. 

Also, we present a finite basis for the polynomial identities of an $n$-dimensional null-filiform Leibniz algebra, and a linear basis for its relatively free algebras.

In the last section we consider an infinite-dimensional analog of the null-filiform Leibniz algebras, presenting a finite basis for its identities and describing the image of multilinear and multihomogeneous polynomials on it.

\section{Preliminaries}
In this paper $K$ will be a field.
Any algebra will be over $K$. A Leibniz algebra $L$ is an algebra that satisfies the relation
\begin{align}
\label{Leibidentity}
    x(yz)=(xy)z-(xz)y 
\end{align}
for all $x$, $y$ and $z \in L$. In other words, right multiplication by any element $z$ is a derivation. The above identity is called Leibniz identity. If in addition $x^2 = 0$ for any $x \in L$, then the Leibniz algebra is a Lie algebra. Conversely any Lie algebra is a Leibniz algebra. In this sense, Leibniz algebras can be seen as a non-commutative generalization of Lie algebras. For a given Leibniz algebra $L$, the chain of ideals defined inductively by
\[ L^{1}=L, ~ L^{k+1}=L^{k}L , ~ k\geq 1,\]
is called the lower central series of $L$.
A Leibniz algebra $L$ is nilpotent, if $L^{m}=0$ for some integer $m > 0$. The minimal number $m$ is called the nilpotency index of $L$.
\begin{definition}[\cite{Omirov_Null}]
An $n$-dimensional Leibniz algebra $L$ is called null-filiform if 
$$\dim L^{i}=n+1 -i \mbox{ where } 1 \leq i \leq n+1.$$ 
\end{definition}
It is easy to see that every null-filiform Leibniz algebras has maximal nilpotency index.

The next is an important result concerning null-filiform Leibniz algebras. It was proved in \cite{Omirov_Null} and it states that up to isomorphism there is only one such algebra in each dimension.

\begin{theorem}\label{basis}
An arbitrary $n$-dimensional null-filiform Leibniz algebra is isomorphic to the algebra $L_n = \mbox{span}\{e_{1}, \dots, e_{n}\}$ with multiplication table given by
\[ e_{i}e_{1}= e_{i+1},\, 1 \leq i \leq n-1,\] with all other products defined as zero.
\end{theorem}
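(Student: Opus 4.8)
The plan is to exploit the descending filtration $L = L^1 \supset L^2 \supset \cdots \supset L^{n+1} = 0$, whose successive quotients are all one-dimensional by hypothesis, and to manufacture a basis by iterating a single right multiplication. First I would record the only structural fact needed beyond the definition $L^{k+1} = L^kL$, namely the inclusion $L^iL^2 \subseteq L^{i+2}$: for $w \in L^i$ and a generator $ab \in L^2$ with $a,b \in L$, the Leibniz identity gives $w(ab) = (wa)b - (wb)a$, and since $wa, wb \in L^iL = L^{i+1}$, both terms lie in $L^{i+1}L = L^{i+2}$.

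Next, fix any $e_1 \in L \setminus L^2$, so that $\bar e_1$ is a basis of the one-dimensional space $L/L^2$ and $L = Ke_1 \oplus L^2$. Writing $R_{e_1}(x) = xe_1$ for right multiplication, I would show that $R_{e_1}$ induces, for each $1 \le i \le n-1$, a linear isomorphism $\bar R_{e_1} \colon L^i/L^{i+1} \to L^{i+1}/L^{i+2}$. It is well defined because $L^{i+1}e_1 \subseteq L^{i+1}L = L^{i+2}$. For surjectivity, write an arbitrary $l \in L$ as $\alpha e_1 + m$ with $m \in L^2$; then $L^{i+1} = L^iL = L^ie_1 + L^iL^2 \subseteq L^ie_1 + L^{i+2}$, so $L^{i+1}/L^{i+2}$ is spanned by the image of $L^ie_1$. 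As both quotients are one-dimensional, $\bar R_{e_1}$ is an isomorphism.

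With these isomorphisms in hand I would define $e_{i+1} := e_ie_1$ for $1 \le i \le n-1$. Since $e_1 \in L^1 \setminus L^2$ and each $\bar R_{e_1}$ is an isomorphism, induction shows $\bar e_i \neq 0$ in $L^i/L^{i+1}$, i.e. $e_i \in L^i \setminus L^{i+1}$; as the $e_i$ lie in distinct layers of the filtration they are linearly independent, and being $n$ in number they form a basis of $L$. By construction $e_ie_1 = e_{i+1}$ for $i \le n-1$, while $e_ne_1 \in L^nL = L^{n+1} = 0$. It then remains to check $e_ie_j = 0$ for all $j \ge 2$. The key case is $j = 2$: since $e_2 = e_1e_1$, the Leibniz identity with $y = z = e_1$ gives $e_i(e_1e_1) = (e_ie_1)e_1 - (e_ie_1)e_1 = 0$. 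For $j \ge 3$ I would induct using $e_ie_j = e_i(e_{j-1}e_1) = (e_ie_{j-1})e_1 - (e_ie_1)e_{j-1}$, both terms vanishing by the inductive hypothesis. This yields precisely the multiplication table of $L_n$, and the linear map sending the constructed basis to the standard one is the desired isomorphism.

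I expect the main obstacle to be the second step: extracting the consecutive isomorphisms $\bar R_{e_1}$ from the bare dimension hypothesis $\dim L^i = n+1-i$. This hinges on combining the definitional identity $L^{i+1} = L^iL$ with the inclusion $L^iL^2 \subseteq L^{i+2}$ to see that right multiplication by the single element $e_1$ already exhausts each quotient. Once that is in place, both the construction of the basis and the vanishing of the remaining products follow routinely from the Leibniz identity.
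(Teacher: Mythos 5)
Your proof is correct, but note that the paper you are being compared against does not actually prove this statement: Theorem \ref{basis} is imported from the literature, cited to Ayupov--Omirov \cite{Omirov_Null}, so your argument is being measured against that classification result rather than against anything in the text. On its own merits, your proposal is a complete and self-contained proof. The three pillars all hold up under scrutiny: (1) the inclusion $L^iL^2\subseteq L^{i+2}$ follows exactly as you say from $x(yz)=(xy)z-(xz)y$; (2) combined with $L=Ke_1\oplus L^2$ it gives $L^{i+1}=L^iL\subseteq L^ie_1+L^{i+2}$, so the induced map $\bar R_{e_1}\colon L^i/L^{i+1}\to L^{i+1}/L^{i+2}$ is surjective, hence an isomorphism of one-dimensional spaces; and (3) the vanishing of $e_ie_j$ for $j\geq 2$ follows by your induction, where for the second term $(e_ie_1)e_{j-1}$ one uses that $e_ie_1$ is again a basis vector (or zero when $i=n$), so the inductive hypothesis applies to it. Your construction --- pick $e_1\notin L^2$, iterate right multiplication to get $e_{i+1}=e_ie_1$, read off the table from the Leibniz identity --- is essentially the standard argument behind the Ayupov--Omirov theorem, phrased cleanly in terms of the maps induced on the layers of the lower central series; the one point worth emphasizing, which you correctly identified as the crux, is that the bare dimension hypothesis only becomes usable once $L^iL^2\subseteq L^{i+2}$ shows that multiplication by the single element $e_1$ already accounts for each successive quotient.
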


In particular, when studying images of polynomials on null-filiform Leibniz algebras, it is enough to study images of polynomials on the algebra with the above multiplication table. In fact, if $\varphi: A \longrightarrow B$ is an isomorphism of algebras, and $f(x_1, \dots, x_m)\in K\{X\}$, then for any $a_1, \dots, a_m\in A$, one has 
\[\varphi(f(a_1, \dots, a_m))=f(\varphi(a_1), \dots, \varphi(a_m)), \] 
as a consequence, $S\subseteq A$ is the image of a polynomial $f$ on $A$ if and only if $\varphi(S)$ is the image of $f$ on $B$.

\section{Identities of null-filiform Leibniz algebras}

Let $X = \{x_1, x_2, x_3, \ldots \}$ be an infinite enumerable set of variables and $\mathcal{D}(X)$ be the free Leibniz algebra of countable rank on the set $X$, i.e., the algebra generated by $X$ which satisfies the following universal property: given a Leibniz algebra $L$, any map $X\rightarrow L$ there exists a unique homomorphism of algebras $\mathcal{D}(X) \rightarrow L$ extending the given map. The elements of $\mathcal{D}(X)$ will be called polynomials and we will say that a polynomial $f = f(x_1, x_2, \ldots, x_k) \in \mathcal{D}(X)$ is a \emph{polynomial identity} of a Leibniz algebra $L$ if $f(a_1, a_2, \ldots, a_k) = 0$ for all $a_1$, $a_2$, $\ldots$, $a_n \in L$, that is, if $f$ lies in the kernel of any homomorphism from $\mathcal{D}(X)$ to $L$. 
The subset of $\mathcal{D}(X)$ of all polynomial identities of $L$ is denoted by $\mbox{Id}(L)$. The set $\mbox{Id}(L)$ is a $T$-ideal of $\mathcal{D}(X)$, that is, an ideal invariant under all endomorphisms of $\mathcal{D}(X)$. We denote by 
$\langle S \rangle^T$ the $T$-ideal generated by $S \subseteq \mathcal{D}(X)$, i.e., the intersection of the $T$-ideals of $\mathcal{D}(X)$ that contain $S$. A set of identities $S \subseteq \mbox{Id}(L)$ is a basis of $\mbox{Id}(L)$, if $\mbox{Id}(L)= \langle S \rangle^T$.
A basis of $Id(L)$ is said to be minimal if does not contain properly a basis of $Id(L)$. In order to simplify the notation we define
$$a b ^{(n)} := (\cdots ((a \underbrace{b)b) \cdots) b}_{n-times}$$ for all $a$, $b$ in a Leibniz algebra $L$. 

\begin{remark}\label{nobrackets}
    By Leibniz identity (\ref{Leibidentity}), any polynomial in $\mathcal{D}(X)$ can be written as a linear combination of left-normed monomials, i.e., monomials of the type
    \[(\cdots((x_{i_1}x_{i_2})x_{i_3}) \cdots x_{i_{m-1}})x_{i_m}.\]
    For this reason, we will omit the brackets in left-normed monomials. For instance, the above monomial will be denoted simply by $x_{i_1}x_{i_2}\cdots x_{i_m}$.
\end{remark}

We start with some basic identities of the $n$-dimensional null-filiform Leibniz algebra, which will be useful later in this paper.

\begin{lemma} Let $L_n$ be a $n$-dimensional null-filiform Leibniz algebra. The polynomials

\begin{align}
			&x_1(x_2x_3)\label{identidadeqq1}\\   &x_1x_2 x_3\cdots x_{n} - x_2 x_1 x_3\cdots x_{n}\label{identidadeqq3}
\end{align}
are polynomial identities of $L_n$.
\end{lemma}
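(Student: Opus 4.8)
The plan is to verify each identity directly on the basis $\{e_1, \dots, e_n\}$ using the multiplication table from Theorem~\ref{basis}, exploiting that $\mathrm{Id}(L_n)$ is determined by evaluations on basis elements and that (by Remark~\ref{nobrackets}) every monomial can be taken left-normed. The key structural fact I would extract first is how iterated left-normed products behave: since $e_i e_1 = e_{i+1}$ for $1 \le i \le n-1$ and \emph{all} other products vanish, a nonzero value of a left-normed monomial forces a very rigid pattern. Specifically, $e_i e_j = 0$ whenever $j \ge 2$, and $e_i e_1 = e_{i+1}$ with $e_n e_1 = 0$ as well. Thus a left-normed product $e_{i_1} e_{i_2} \cdots e_{i_m}$ is nonzero only if $i_2 = i_3 = \cdots = i_m = 1$ and $i_1 + (m-1) \le n$, in which case it equals $e_{i_1 + m - 1}$.

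For identity~\eqref{identidadeqq1}, I would argue that the inner product $x_2 x_3$ must be evaluated; writing $x_2 \mapsto \sum_j b_j e_j$ and $x_3 \mapsto \sum_k c_k e_k$, the product $(x_2 x_3)$ lies in $\mathrm{span}\{e_2, \dots, e_n\}$ because $e_j e_k \ne 0$ forces $k=1$, giving $e_j e_1 = e_{j+1}$ with $j+1 \ge 2$. Hence any value of $x_2 x_3$ is a combination of $e_2, \dots, e_n$, i.e.\ has zero $e_1$-component. Then $x_1 (x_2 x_3)$ is a product of the form $(\text{anything}) \cdot v$ where $v$ has no $e_1$-component; since right multiplication by $e_k$ is zero for $k \ge 2$, the whole expression vanishes. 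This is the cleanest way to see~\eqref{identidadeqq1}: it is really the identity $x_1(x_2 x_3) = 0$ reflecting that the square $L_n L_n \subseteq \mathrm{span}\{e_2,\dots,e_n\} = \mathrm{Ann}_r(L_n)$ annihilates everything on the right.

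For identity~\eqref{identidadeqq3}, I would substitute generic elements $x_i \mapsto \sum_j a^{(i)}_j e_j$ and expand both left-normed monomials $x_1 x_2 x_3 \cdots x_n$ and $x_2 x_1 x_3 \cdots x_n$ by multilinearity. By the rigidity observation, a surviving term of a degree-$n$ left-normed product $e_{i_1} e_{i_2} \cdots e_{i_n}$ requires $i_2 = \cdots = i_n = 1$ and $i_1 = 1$ (so that $i_1 + (n-1) = n \le n$); any $i_1 \ge 2$ already overflows the index. Therefore the only surviving contribution to each monomial comes from substituting $e_1$ into every variable, and both $x_1 x_2 \cdots x_n$ and $x_2 x_1 \cdots x_n$ collapse to the \emph{same} scalar multiple of $e_1 e_1 \cdots e_1 = e_n$. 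Their difference is thus identically zero. Equivalently, one can derive~\eqref{identidadeqq3} from~\eqref{identidadeqq1} together with the Leibniz identity: Leibniz gives $(x_1 x_2) x_3 - (x_1 x_3) x_2 = x_1(x_2 x_3)$, and a careful bookkeeping shows the degree-$n$ commutator in the first two slots reduces to terms each containing a factor that has been ``pushed inside'' a bracket lying in $\mathrm{Id}(L_n)$.

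I expect the main obstacle to be purely notational rather than conceptual: keeping the index bookkeeping correct when expanding the degree-$n$ product in~\eqref{identidadeqq3}, since one must confirm that \emph{every} cross term with some $i_t \ge 2$ for $t \ge 2$, or with $i_1 \ge 2$, genuinely vanishes rather than merely shifting. The direct substitution approach avoids invoking the Leibniz identity repeatedly and makes the ``only $e_1$ survives'' phenomenon transparent, so I would present that as the primary argument, mentioning the Leibniz-identity derivation only as a remark.
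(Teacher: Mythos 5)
Your primary argument is correct and is, in substance, exactly the paper's proof: the paper disposes of the lemma in one line by saying the verification is immediate from the multiplication table of Theorem~\ref{basis}, and what you have done is write out that verification explicitly (the rigidity observation that a left-normed product $e_{i_1}e_{i_2}\cdots e_{i_m}$ survives only when $i_2=\cdots=i_m=1$ and $i_1+m-1\le n$, in which case it equals $e_{i_1+m-1}$). Both checks — that values of $x_2x_3$ have no $e_1$-component and hence right-annihilate everything, and that for the degree-$n$ polynomial only the all-$e_1$ substitution survives, giving the same scalar multiple of $e_n$ for both monomials — are sound.

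However, you should delete the sentence beginning ``Equivalently, one can derive~\eqref{identidadeqq3} from~\eqref{identidadeqq1} together with the Leibniz identity.'' That claim is false, and the paper itself proves its negation: in the free Leibniz algebra $\mathcal{D}(X)$ the Leibniz identity already holds identically, so your claim amounts to saying that \eqref{identidadeqq3} lies in the $T$-ideal generated by \eqref{identidadeqq1}; but \eqref{identidadeqq1} is an identity of the null-filiform algebra $L_k$ for every $k$, while \eqref{identidadeqq3} fails in $L_{n+1}$ (take $x_1=e_2$ and $x_i=e_1$ for $i\ge 2$: the first monomial gives $e_{n+1}\neq 0$ and the second gives $e_1e_2\cdots=0$). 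This non-consequence is precisely what makes the basis of Theorem~\ref{id(L_n)} \emph{minimal}, so asserting the opposite would contradict a key point of the paper. What Leibniz plus \eqref{identidadeqq1} actually yields is permutability of the variables in positions $2,\dots,m$ (Proposition~\ref{consequences}(ii)--(iii)); it never lets you swap position $1$ with position $2$, which is the entire content of \eqref{identidadeqq3}. Since you flag that derivation only as a remark and your direct substitution stands on its own, the fix is simply to remove it.
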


\begin{proof} The verification that the above polynomials are identities for $L_n$ is immediate, since by Theorem \ref{basis} we may assume $L_n = \mbox{span}\{e_{1}, \dots, e_{n}\}$ with
$e_{i}e_{1}= e_{i+1},\, 1 \leq i \leq n-1$ and all the other products are zero.     
\end{proof}

The next  proposition gives some consequences of the above identities.

\begin{proposition}\label{consequences}
Let $I$ be the $T$-ideal of $\mathcal{D}(X)$ generated by \emph{(\ref{identidadeqq1})} and \emph{(\ref{identidadeqq3})}. Then
\begin{enumerate}

\item[(i)] $x_1 x_2\cdots x_{n+1} \label{identidadeqq2} \in I;$
    \item[(ii)] $x_1x_2x_3-x_1x_3x_2 \in I;$
    \item[(iii)] $x_0x_{\sigma(1)}\cdots x_{\sigma(k)} - x_0 x_1 \cdots x_k \in I$, for any $k\geq 1$ and for any $\sigma\in S_k$;
    \item[(iv)] $x_{\sigma(1)}x_{\sigma(2)} \cdots x_{\sigma(n)} - x_1x_2 \cdots x_{n} \in I$,  for any $\sigma \in S_n$.
\end{enumerate}

In particular, if $L_n$ is an $n$-dimensional null-filiform Leibniz algebra, the polynomials in $\emph{(i)}$, $\emph{(ii)}$, $\emph{(iii)}$ and $\emph{(iv)}$ above are identities for $L_n$.
   
\end{proposition}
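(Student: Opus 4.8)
The plan is to exploit two features of the $T$-ideal $I$: it is closed under all endomorphisms of $\mathcal{D}(X)$ (so we may substitute arbitrary polynomials for the variables of the two generators), and it is a two-sided ideal (so we may multiply any element of $I$ on the right by a monomial and stay inside $I$). Throughout I would work with left-normed monomials as in Remark \ref{nobrackets}, recalling that in $\mathcal{D}(X)$ the Leibniz identity (\ref{Leibidentity}) gives the honest equality $x_1(x_2x_3) = x_1x_2x_3 - x_1x_3x_2$. This observation settles (ii) at once: the generator (\ref{identidadeqq1}) is literally the polynomial $x_1x_2x_3 - x_1x_3x_2$ rewritten in left-normed form, so (ii) lies in $I$ by definition.

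For (iii) I would first show that any two adjacent letters after the head can be transposed. Substituting $x_1 \mapsto x_0x_1\cdots x_{i-1}$, $x_2 \mapsto x_i$, $x_3 \mapsto x_{i+1}$ into the polynomial of (ii) and multiplying the result on the right by $x_{i+2}\cdots x_k$ produces $x_0x_1\cdots x_ix_{i+1}\cdots x_k - x_0x_1\cdots x_{i+1}x_i\cdots x_k \in I$ for every $1 \le i \le k-1$. Since the adjacent transpositions $(i,i+1)$ generate $S_k$ and $I$ is a subspace, telescoping along any factorization of $\sigma$ into such transpositions yields (iii).

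Part (iv) then follows by enlarging the available transpositions. Applying (iii) with $k=n-1$ to the last $n-1$ letters of a length-$n$ monomial (that is, with the first letter playing the role of the head $x_0$) already realizes every permutation fixing the first position, in particular the transpositions $(2,3),\dots,(n-1,n)$ as monomial differences in $I$. The missing transposition $(1,2)$ is supplied by (\ref{identidadeqq3}) itself: substituting the current arrangement $x_j \mapsto x_{a_j}$ into (\ref{identidadeqq3}) swaps the first two letters of any monomial of length exactly $n$. As $(1,2),(2,3),\dots,(n-1,n)$ is the standard Coxeter generating set of $S_n$, the same telescoping argument gives (iv); since the length of the monomial is preserved at each step, the use of (\ref{identidadeqq3}), which requires exactly $n$ letters, is always legitimate.

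The main obstacle is (i), since none of the generators visibly forces a monomial of degree $n+1$ to vanish. The trick I would use is to feed a length-two product into (\ref{identidadeqq3}): substituting $x_1 \mapsto x_1x_2$ and $x_i \mapsto x_{i+1}$ for $2 \le i \le n$ gives
\[(x_1x_2)x_3\cdots x_{n+1} - x_3(x_1x_2)x_4\cdots x_{n+1} \in I.\]
The first term is exactly the left-normed monomial $x_1x_2\cdots x_{n+1}$, while the second term contains the nested factor $x_3(x_1x_2)$, which is a substitution instance of (\ref{identidadeqq1}) and hence lies in $I$; multiplying it on the right by $x_4\cdots x_{n+1}$ keeps it in $I$. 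Subtracting shows $x_1x_2\cdots x_{n+1} \in I$, proving (i). Finally, the closing assertion that all four polynomials are identities of $L_n$ is immediate, since (\ref{identidadeqq1}) and (\ref{identidadeqq3}) are identities of $L_n$ by the previous lemma, so $\mathrm{Id}(L_n)$ is a $T$-ideal containing $I$.
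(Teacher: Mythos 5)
Your proof is correct and follows essentially the same route as the paper's: (ii) is read off directly from the Leibniz identity, (iii) and (iv) are obtained by generating the symmetric group from the transpositions supplied by (ii) and (\ref{identidadeqq3}), and (i) comes from substituting a product into the first variable of (\ref{identidadeqq3}) and absorbing the resulting nested term via (\ref{identidadeqq1}). The only difference is that you spell out the telescoping and ideal-multiplication details that the paper leaves implicit.
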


\begin{proof}
    \begin{enumerate}
        \item[(i)] Replacing $x_1$ by $xy$ in $(\ref{identidadeqq3})$ we obtain $xyx_2x_3\cdots x_n - x_2(xy)x_3 \cdots x_n \in I$. Since $x_2(xy)x_3 \cdots x_n$ is a consequence of (\ref{identidadeqq1}) then $xyx_2x_3\cdots x_n \in I.$
        \item[(ii)] This is a direct consequence of Leibniz identity, once $(x_1x_2)x_3-(x_1x_3)x_2 = x_1(x_2 x_3) \in I.$
        \item[(iii)] One obtains this identity by induction on $k$ using (ii).
        \item[(iv)] This is a direct consequence of (ii) and identity (\ref{identidadeqq3}).
    \end{enumerate}
\end{proof}

\begin{lemma}\label{relativelyfree} Let $S$ be the subset of $\mathcal{D}(X)$ of all monomials 
\begin{enumerate}
\item[(i)] $x_k$ where $k \geq 1$;
 \item[(ii)] $x_jx_{i_1}^{(d_1)}x_{i_2}^{(d_2)} \ldots x_{i_r}^{(d_r)}$ where  $0 < d_1 + \cdots + d_r \leq n - 2$, $1 \leq i_1 < \cdots < i_r$ and $j, r \geq 1$;
 \item[(iii)] $x_{j_1}x_{j_1}^{(d_1)}x_{j_2}^{(d_2)} \ldots x_{j_s}^{(d_s)}$ where  $0 < d_1 + \cdots + d_s =  n - 1$ and $ 1 \leq j_1 < \cdots < j_s$ and $s \geq 1$.
 \end{enumerate}
Then the quotient vector space $\frac{\mathcal{D}(X)}{I}$ is spanned by the set of all elements $g+I$ where $g \in S$.
\end{lemma}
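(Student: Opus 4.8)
The plan is to prove the spanning statement by a direct rewriting argument, turning every left-normed monomial into a canonical sorted form using only the consequences already collected in Proposition~\ref{consequences}. First I would invoke Remark~\ref{nobrackets}: every polynomial of $\mathcal{D}(X)$ is already a linear combination of left-normed monomials, so the quotient $\mathcal{D}(X)/I$ is spanned by the cosets $w+I$ with $w=x_{i_1}x_{i_2}\cdots x_{i_m}$ ranging over all left-normed monomials. It therefore suffices to show that each such coset equals the coset of a monomial in $S$, or is zero; I would carry this out by a case analysis on the length $m$ of $w$.

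The two extreme cases are immediate. If $m=1$ then $w=x_{i_1}$ is already of type (i). If $m\geq n+1$, then by Proposition~\ref{consequences}(i) we have $x_{i_1}\cdots x_{i_{n+1}}\in I$, and since $w$ is the right multiple $\bigl(x_{i_1}\cdots x_{i_{n+1}}\bigr)x_{i_{n+2}}\cdots x_{i_m}$ of this initial segment and $I$ is an ideal, we get $w\in I$, i.e.\ $w+I=0$. The entire content of the lemma lies in the two remaining ranges, $2\leq m\leq n-1$ and $m=n$, which must be handled differently.

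For $2\leq m\leq n-1$ I would use Proposition~\ref{consequences}(iii) to permute the tail $x_{i_2},\dots,x_{i_m}$ into non-decreasing order of indices while keeping the head $x_{i_1}$ fixed; collecting equal indices rewrites $w$ modulo $I$ as $x_{i_1}x_{k_1}^{(d_1)}\cdots x_{k_r}^{(d_r)}$ with $k_1<\cdots<k_r$ and $d_1+\cdots+d_r=m-1\leq n-2$, which is exactly type (ii). For $m=n$ the identity \eqref{identidadeqq3}, in the symmetrized form of Proposition~\ref{consequences}(iv), becomes available and lets me permute all $n$ variables, including the head; sorting the full multiset of indices pushes the smallest index $j_1$ to the front and yields $x_{j_1}x_{j_1}^{(d_1)}x_{j_2}^{(d_2)}\cdots x_{j_s}^{(d_s)}$ with $j_1<\cdots<j_s$ and $d_1+\cdots+d_s=n-1$, which is type (iii). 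Since these four ranges exhaust all lengths, the cosets of the elements of $S$ span $\mathcal{D}(X)/I$.

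The step I expect to require the most care is not a computation but the bookkeeping that explains why the head variable is free in type (ii) yet forced to be minimal in type (iii): the permutation identity of Proposition~\ref{consequences}(iv) is homogeneous of degree exactly $n$, so only for monomials of full length $n$ may the head be interchanged with the tail, whereas for shorter monomials Proposition~\ref{consequences}(iii) permits sorting the tail alone. I would also record that every rewriting identity used is homogeneous, so the reduction preserves degree and terminates after applying a single sorting permutation, with no need for a subtler induction or confluence argument.
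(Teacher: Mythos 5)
Your proof is correct and rests on exactly the same ingredients as the paper's: Proposition~\ref{consequences}(iii) to sort the tail, Proposition~\ref{consequences}(iv) to sort the whole monomial in the degree-$n$ case, and Proposition~\ref{consequences}(i) together with the $T$-ideal property to dispose of long monomials. The only difference is organizational: the paper runs an induction on the degree, peeling off the rightmost variable and inserting it into the sorted form supplied by the induction hypothesis (restricting attention to monomials outside $I$, which makes the length-$\geq n+1$ case disappear), whereas you first invoke Remark~\ref{nobrackets} to reduce to left-normed monomials and then sort each one in a single pass; both versions are valid and of essentially the same depth.
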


\begin{proof} Let $m(x_{i_1}, x_{i_2}, \ldots, x_{i_r})$ be a monomial of $\mathcal{D}(X) - I$. The proof will be by induction in $\mbox{deg}(m)$. If $\mbox{deg}(m) = 1$ then there exists $j \in \{1, \ldots, r\}$ such that $m(x_{i_1}, x_{i_2}, \ldots, x_{i_r}) = x_{i_j}$. If $\mbox{deg}(m) > 1$, there exists $m_1$, $m_2 \in \mathcal{D}(X)$ such that $m = (m_1)(m_2)$ and $\mbox{deg}(m) > \mbox{deg}(m_1)$, $\mbox{deg}(m) > \mbox{deg}(m_2)$. Since the polynomial $(\ref{identidadeqq1})$ lies in $I$, and $m\not \in I$, we may assume $\deg(m_2)=1$. Then there exists $x_{i_s}$ such that $m \equiv (m_1)x_{i_s}$ (mod $I$). Thus $(m_1)x_{i_s} \equiv (x_{i_t}x_{i_1}^{(d_1)}x_{i_2}^{(d_2)} \ldots x_{i_r}^{(d_r)})x_{i_s}$ (mod $I$), by induction hypothesis. 

Therefore, using {(iii)} of Proposition \ref{consequences}, $m \equiv x_{i_t}x_{i_1}^{(d_1)}x_{i_2}^{(d_2)} \ldots x_{i_s}^{(d_s + 1)} \ldots  x_{i_r}^{(d_r)}$ (mod $I$) with $d_1 + d_2 + \cdots (d_s + 1) + \cdots + d_r \leq n - 1$. If $d_1 + d_2 + \cdots (d_s + 1) + \cdots + d_r \leq n - 2$, we have a monomial of type (ii). If $d_1 + d_2 + \cdots (d_s + 1) + \cdots + d_r =  n - 1$, we may use (iv) of Proposition \ref{consequences} to obtain a monomial of type (iii), modulo $I$.
\end{proof}

The above allow us to determine a finite basis for identities of a null-filiform Leibniz algebra of arbitrary dimension when the field $K$ is infinite.

\begin{theorem}\label{id(L_n)}
Let $K$ be an infinite field and $L_n$ a $n$-dimensional null-filiform Leibniz algebra. Then
\emph{(\ref{identidadeqq1})} and \emph{(\ref{identidadeqq3})}
are a minimal basis of $\mbox{Id}(L_n).$
\end{theorem}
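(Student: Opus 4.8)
The plan is to prove the equality $\mathrm{Id}(L_n)=I$, where $I=\langle(\ref{identidadeqq1}),(\ref{identidadeqq3})\rangle^T$, since the minimality will then be a separate redundancy check. The inclusion $I\subseteq\mathrm{Id}(L_n)$ is exactly the content of the Lemma, so only the reverse inclusion is at stake. Here I would exploit Lemma \ref{relativelyfree}: given $f\in\mathrm{Id}(L_n)$, we may write $f\equiv\sum_s c_s\,s\pmod{I}$ for finitely many monomials $s\in S$, and because $I\subseteq\mathrm{Id}(L_n)$ the combination $\sum_s c_s\,s$ is again an identity of $L_n$. Thus it suffices to prove that the only linear combination of monomials of $S$ that is an identity of $L_n$ is the trivial one; this forces $f\in I$ and hence $\mathrm{Id}(L_n)=I$. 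The whole problem is therefore reduced to the linear independence, modulo $\mathrm{Id}(L_n)$, of the classes of the monomials of $S$.

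The heart of the argument is this independence, which I would establish by the method of generic elements. Assign to each variable $x_v$ the generic element $g_v=\sum_{c=1}^{n}\xi_{v,c}e_c$ with commuting indeterminate coefficients $\xi_{v,c}$, working in $L_n\otimes_K K[\xi_{v,c}]$. Since $e_ce_1=e_{c+1}$ (with $e_{n+1}=0$) and $e_ce_j=0$ for $j\ge 2$, right multiplication by $g_v$ sends $e_c$ to $\xi_{v,1}e_{c+1}$, so a left-normed monomial evaluates as
\begin{equation*}
x_{a_0}x_{a_1}\cdots x_{a_d}\longmapsto\Big(\prod_{l=1}^{d}\xi_{a_l,1}\Big)\sum_{c=1}^{n-d}\xi_{a_0,c}\,e_{c+d}.
\end{equation*}
Only the head letter $x_{a_0}$ contributes through its higher coordinates, while every tail letter contributes only its first coordinate $\xi_{\cdot,1}$.

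Because $K$ is infinite, a linear combination of elements of $S$ is an identity iff each of its multihomogeneous components is, so I would fix a multidegree. The three families of $S$ are separated by total degree (degree $1$ for (i), between $2$ and $n-1$ for (ii), and exactly $n$ for (iii)), so no cancellation can occur across families. Type (i) is immediate. For a fixed multidegree of total degree $D$ with $2\le D\le n-1$, the competing type (ii) monomials differ only in the choice of head $x_j$; reading off the $e_{D+1}$-component of the evaluation above (the term $c=2$) gives, for head $j$, the coefficient $c_j\,\xi_{j,2}\,\xi_{j,1}^{\delta_j-1}\prod_{v\ne j}\xi_{v,1}^{\delta_v}$, and these $\xi$-monomials are pairwise distinct because the factor $\xi_{j,2}$ identifies the head. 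As $K$ is infinite, the vanishing of this polynomial forces every $c_j=0$. For type (iii) each multidegree of total degree $n$ admits a unique canonical monomial (head equal to the least variable present), and setting all variables equal to $e_1$ sends it to $e_n\ne 0$, so its coefficient vanishes as well. This yields the independence, and hence $\mathrm{Id}(L_n)=I$. I expect this separation of same-multidegree monomials differing only in their head to be the main technical obstacle, resolved precisely by inspecting the head's second coordinate.

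Finally, minimality amounts to showing that neither generator lies in the $T$-ideal generated by the other. That (\ref{identidadeqq3}) is not a consequence of (\ref{identidadeqq1}) I would read off the reduced form modulo $\langle(\ref{identidadeqq1})\rangle^T$: by Proposition \ref{consequences}(ii) the relation $x_1(x_2x_3)$ already permits sorting the tail of any left-normed monomial while leaving the head untouched, so $x_1x_2\cdots x_n$ and $x_2x_1x_3\cdots x_n$ remain distinct reduced monomials and their difference (\ref{identidadeqq3}) is nonzero there (a fact best certified by exhibiting an explicit Leibniz algebra satisfying (\ref{identidadeqq1}) that separates the two words). Conversely, for $n\ge 4$ the polynomial (\ref{identidadeqq1}) has degree $3$ while every nonzero element of $\langle(\ref{identidadeqq3})\rangle^T$ has degree at least $n>3$, so (\ref{identidadeqq1}) cannot be a consequence of (\ref{identidadeqq3}); the remaining small values of $n$ I would settle by a direct computation in the multilinear component of the relevant degree. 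I anticipate this low-degree redundancy check, where the degree bound is unavailable, to be the most delicate point of the minimality part.
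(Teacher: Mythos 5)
Your proof of the equality $\mathrm{Id}(L_n)=I$ follows the paper's route: both arguments combine the spanning set of Lemma \ref{relativelyfree} with evaluations showing that the monomials of $S$ are linearly independent modulo $\mathrm{Id}(L_n)$, split according to total degree $1$, $2\le D\le n-1$, and $n$. Your generic-element computation is just the systematic form of the paper's substitution $x_j\mapsto e_1+e_2$, $x_i\mapsto e_1$ for $i\ne j$: the marker $\xi_{j,2}$ in the $e_{D+1}$-coordinate extracts exactly the coefficient $\alpha_j$ that the paper reads off from the relation $\alpha_j e_{s+1}+\beta e_s=0$. The genuine difference is in the minimality part, where you are more complete than the paper. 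The paper certifies only that (\ref{identidadeqq3}) does not lie in $\langle(\ref{identidadeqq1})\rangle^T$, by observing it fails in $L_k$ for $k\ge n+1$ (this is the explicit separating algebra your parenthetical calls for; your first justification via ``distinct reduced words'' would indeed be circular without it). The paper says nothing about the converse non-containment $(\ref{identidadeqq1})\notin\langle(\ref{identidadeqq3})\rangle^T$, which the definition of minimality also requires; your degree bound handles $n\ge4$, and your deferred small-$n$ computation is not a formality. For $n=3$ it succeeds: the multilinear degree-$3$ part of $\langle x_1x_2x_3-x_2x_1x_3\rangle^T$ is spanned by the three skew differences $x_ax_bx_c-x_bx_ax_c$, whose span does not contain $x_1(x_2x_3)=x_1x_2x_3-x_1x_3x_2$. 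But for $n=2$ and $\mathrm{char}\,K\ne2$ it fails: commutativity plus the Leibniz identity give $x_1(x_2x_3)=x_1(x_3x_2)=-x_1(x_2x_3)$, hence $x_1(x_2x_3)\in\langle x_1x_2-x_2x_1\rangle^T$ and the two-element basis is not minimal. So your more careful minimality check both fills a step the paper leaves implicit and reveals that the minimality claim needs $n\ge3$ (at least away from characteristic $2$) --- precisely the delicate point you anticipated.
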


\begin{proof}
    Let $f(x_1, \dots, x_m)\in Id(L_n)$. We are going to show that $f\in I$. Since $K$ is infinite, we may assume $f=f(x_1, \dots, x_m)$ is multihomogeneous of multidegree $(d_1, \dots, d_m)$, with $0\leq d_i\leq n$, for each $i$ ($d_i=0$ means that the variable $x_i$ is missing). By Lemma \ref{relativelyfree}, $f$ is a linear combination of monomials of types (i)-(iii). If $\deg f = 1$, then $f = \alpha x_j$, for some $j\leq n$ and $\alpha\in K$, and $f$ is an identity if and only if $f=0$. If $\deg f = n$, then $f(x_1, \dots, x_m) = \alpha x_1^{(d_1)}\cdots x_m^{(d_m)} \mod I$, for some $\alpha\in K$. Since $f\in Id(L_n)$, $0=f(e_1, \dots, e_1) = \alpha e_n$, we obtain $\alpha = 0$ and $f=0 \mod I$, i.e., $f\in I$. 
    
    Finally, if $1<s=\deg f<n$, then modulo $I$, $f$ can be written as \[f(x_1, \dots, x_m) = \sum_{j = 1}^m \alpha_j x_{j} x_{1}^{(d_1)}x_{2}^{(d_2)} \cdots x_{j}^{(d_j-1)} \cdots x_{m}^{(d_m)},\] with $\alpha_j=0$ whenever $d_j=0$.
       
    For each $j \in \{1, \ldots, m\}$, the substitution $x_{j} = e_{1} + e_2$ and $x_{i} = e_1$ for any $i \neq j$, yields $\alpha_j e_{s+1} + \beta e_{s} = 0$, for some $\beta\in K$ and therefore $\alpha_j = 0$. This proves that $f\in I$ (notice that $s+1\leq n)$. The identity $x_1x_2 x_3\cdots x_{n} - x_2 x_1 x_3\cdots x_{n}$ is not a consequence of $x_1(x_2x_3)$because it is not an identity of the $k$-dimensional null-filiform Leibniz algebra for every $k \geq n+1$.
\end{proof}

The above theorem implies that the monomials of type (i) - (iii) in Lemma \ref{relativelyfree} are linearly independent modulo $Id(L_n)$. In particular, it allows us to compute the dimension of the relatively free algebras of finite rank of $L_n$, $\dfrac{\mathcal{D}(x_1, \dots, x_m)}{Id(L_n)}$, for $m\geq 1$.

\begin{corollary}\label{count} Let $S$ be the set of monomials of type (i) - (iii) in Lemma \ref{relativelyfree}. Then $\{g+Id(L_n)\,|\, g\in S\}$ is a basis for the relatively free algebra of $L_n$, $\dfrac{\mathcal{D}(X)}{Id(L_n)}$. In particular, if $m\geq 1$,
    \[\dim \dfrac{\mathcal{D}(x_1, \dots, x_m)}{Id(L_n)} = 1 + {n+m-1\choose m-1} +  \displaystyle\sum_{s=1}^{n-1}\sum_{l =1}^{min\{m, s\}} l {m\choose l}{s- 1 \choose l - 1} .\]
\end{corollary}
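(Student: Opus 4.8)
The statement has two parts, and the first is essentially already in hand. By Lemma~\ref{relativelyfree} the classes $\{g+I : g\in S\}$ span $\mathcal{D}(X)/I$, and by Theorem~\ref{id(L_n)} we have $I=\mathrm{Id}(L_n)$; together with the linear independence of the type (i)--(iii) monomials modulo $\mathrm{Id}(L_n)$ recorded just after that theorem, this shows at once that $\{g+\mathrm{Id}(L_n):g\in S\}$ is a basis of the relatively free algebra. The real content of the corollary is therefore the dimension count, and the plan is to compute $\dim \mathcal{D}(x_1,\dots,x_m)/\mathrm{Id}(L_n)$ as the number of monomials of $S$ that involve only the variables $x_1,\dots,x_m$, organising the count by total degree.

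The key structural observation I would isolate first is how much information a basis monomial carries in each degree. For a monomial of degree $s$, part (iii) of Proposition~\ref{consequences} permits arbitrary permutation of all factors occurring after the head, so when $1\le s\le n-1$ (types (i) and (ii)) a basis element is determined exactly by a pair consisting of its multidegree $(a_1,\dots,a_m)$ with $\sum a_i=s$ together with a choice of head, which must be one of the variables that actually occur, that is, some $x_j$ with $a_j\ge 1$. When $s=n$ (type (iii)) one has in addition part (iv) of Proposition~\ref{consequences}, which permutes all $n$ factors, head included; hence in top degree the head carries no further information and a basis monomial is determined by its multidegree alone.

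With this dictionary the count becomes purely combinatorial. For a fixed degree $s\in\{1,\dots,n-1\}$, the multidegrees of total degree $s$ whose support has size exactly $l$ number $\binom{m}{l}\binom{s-1}{l-1}$: one chooses the $l$ occurring variables, then a composition of $s$ into $l$ positive parts. Each such multidegree admits exactly $l$ admissible heads, so it contributes $l\binom{m}{l}\binom{s-1}{l-1}$ basis monomials; summing over $l$ and then over $s$ reproduces the double sum in the statement, the $s=1$ term recovering the monomials of type (i). For $s=n$ the contribution is the number of multidegrees of total degree $n$ in $m$ nonnegative integers, namely $\binom{n+m-1}{m-1}$, equivalently $\sum_{l}\binom{m}{l}\binom{n-1}{l-1}$ by Vandermonde. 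Assembling the contributions of the three families of basis monomials then yields the displayed dimension formula.

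The only delicate point, and the step I would write out in full, is the bijective bookkeeping underlying the dictionary above: one must verify that distinct $(\text{multidegree},\text{head})$ pairs really do correspond to distinct spanning monomials of $S$ in degrees $s<n$, so that nothing is counted twice and nothing is omitted, and that in degree $n$ the identification of monomials sharing a multidegree is exact rather than merely surjective. Both facts rest on Proposition~\ref{consequences} together with the linear independence furnished by Theorem~\ref{id(L_n)}, which guarantees that the combinatorially distinct representatives stay linearly independent modulo $\mathrm{Id}(L_n)$. Once this is in place, the three summations are entirely routine.
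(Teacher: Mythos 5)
Your argument follows essentially the same route as the paper's own proof: spanning from Lemma~\ref{relativelyfree} plus the linear-independence remark after Theorem~\ref{id(L_n)} for the basis claim, and then a count organized by total degree, in which a basis monomial of degree $s<n$ corresponds to a pair (multidegree, head) with the head among the occurring variables (giving $l\binom{m}{l}\binom{s-1}{l-1}$ per support size $l$, via part (iii) of Proposition~\ref{consequences}), while in degree $n$ part (iv) collapses the head so that only the multidegree matters (giving $\binom{n+m-1}{m-1}$). All of this matches the paper.

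There is, however, one concrete discrepancy: your assembly does not produce the displayed formula. Summing the contributions of the three families of monomials, as you do, gives $\binom{n+m-1}{m-1}+\sum_{s=1}^{n-1}\sum_{l=1}^{\min\{m,s\}} l\binom{m}{l}\binom{s-1}{l-1}$, which is the stated dimension minus $1$. The leading ``$1+$'' in the formula is the degree-zero contribution: the paper's proof explicitly counts the unit $1$ as the element of degree $0$ of $\mathcal{D}(x_1,\dots,x_m)/Id(L_n)$, and your proof never mentions degree $0$, so as written you have proved a formula that differs from the statement by $1$. (Admittedly this term is a point of tension in the paper itself, since the unit is not a monomial of type (i)--(iii) and free Leibniz algebras are usually taken non-unital; but a complete proof of the corollary as stated must either count this degree-zero element, as the paper does, or make explicit the convention under which it appears.)
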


\begin{proof}
    The fact that the above set is a basis of the relatively free algebra follows from the above remark and from Lemma \ref{relativelyfree}. 
    
    By using Lemma \ref{relativelyfree}, we compute the dimension of polynomials of degree $s$ in $\mathcal{D}(x_1, \dots, x_m)/Id(L_n)$, for each $s\in \{0, \dots, n\}$. The element of degree 0 is the unit 1. For a given $1 \leq s<n$ and for each $1 \leq l \leq min\{m, s\}$, there are ${m \choose l}$ ways to choose $l$ variables  among $x_1, \dots, x_m$. The number of positive solutions of the equation  $d_{i_1} + \cdots + d_{i_l} = s$    where $\deg x_{i_j}=d_{i_j} > 0$ is ${s-1\choose s-l}$. For each of these choices, one has $l$ linearly independent monomials of type (ii) in Lemma \ref{relativelyfree}. Finally, for $s=n$, there is only one possible order modulo $I$ for monomials of degree $n$ for each choice of $(d_1, \dots, d_m)$ (monomials of type (iii) in Lemma \ref{relativelyfree}), so we have dimension ${n+m-1\choose m-1}$.
\end{proof}

\section{Images of multilinear polynomials}

In this section we completely describe the image of an arbitrary multilinear polynomial evaluated on an $n$-dimensional null-filiform Leibniz algebra, $L_n$. The results of this section can be obtained as a corollary of the results of the next section, where we deal with multihomogeneous polynomials, nevertheless, we decided to keep this section independently to highlight this important case. Also, although we could state the results of this section for the polynomial written modulo $Id(L_n)$ (as a linear combination of polynomials of types (i) - (iii) of Lemma \ref{relativelyfree}), we chose to write the polynomials as elements of the free Lebniz algebra in the variables $x_1, x_2, \ldots, x_m$, i.e., as linear combination of left-normed monomials, indexed by permutations of $\{1, \dots, m\}$.

In this section we consider $f=f(x_1, \dots, x_m) = \sum_{\sigma\in S_m} \alpha_\sigma x_{\sigma(1)} \cdots x_{\sigma(m)}\in \mathcal{D}(X)$ a multilinear polynomial in the free Leibniz algebra. If we define $\gamma:=\sum_{\sigma\in S_m}\alpha_\sigma$ and for each $j\in \{1, \dots, m\}$, $\gamma_j:=\sum_{\sigma(1)=j} \alpha_{\sigma}$, then using identities of Proposition \ref{consequences}, we obtain

\begin{equation}\label{normalform}
    f(x_1, \dots, x_m) = \left\{\begin{array}{cc} \sum_{j=1}^m \gamma_j x_j x_1 \cdots \widehat{x_j} \cdots x_m  \mod Id(L_n) & \text{ if } m<n \\
\gamma x_1\cdots x_m \mod Id(L_n) & \text{ if } m=n  
\end{array}\right.
\end{equation} where the symbol $\widehat{x_j}$ means the variable $x_j$ is missing.

\begin{lemma}\label{gammajneq0}
    Let $f(x_1, \dots, x_m) = \sum_{\sigma\in S_m} \alpha_\sigma x_{\sigma(1)} \cdots x_{\sigma(m)}\in \mathcal{D}(X)$ be a multilinear polynomial. 
    If for some $j$, $\gamma_j\neq 0$, then $Im(f) \supseteq (L_n)^{m+1}$.
\end{lemma}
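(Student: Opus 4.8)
The plan is to exploit the explicit normal form (\ref{normalform}) together with the rigid multiplication rule of $L_n$ from Theorem \ref{basis}, which I record once and for all with the convention $e_{n+1}:=0$: one has $e_ae_b = e_{a+1}$ when $b=1$ and $e_ae_b=0$ otherwise. First I would dispose of the trivial range $m\geq n$: since $\dim (L_n)^i = n+1-i$ vanishes for $i\geq n+1$, we have $(L_n)^{m+1}=\{0\}$, and as $f$ has no constant term, $0=f(0,\dots,0)\in Im(f)$, so there is nothing to prove. Thus I may assume $1\leq m<n$, in which case $(L_n)^{m+1}=\mbox{span}\{e_{m+1},\dots,e_n\}$ and, by (\ref{normalform}), modulo $Id(L_n)$ we may write $f=\sum_{i=1}^m \gamma_i\, x_i x_1\cdots \widehat{x_i}\cdots x_m$.

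Fix an index $j$ with $\gamma_j\neq 0$. The heart of the argument is a single substitution: set $x_j=v$ for a vector $v=\sum_{l=2}^{n-m+1} c_l e_l$ carrying no $e_1$-component (the coefficients $c_l$ to be chosen later), and $x_i=e_1$ for every $i\neq j$. I would then evaluate $f$ summand by summand. In the $j$-th summand the leading factor is $v$, followed by $m-1$ copies of $e_1$; applying the multiplication rule repeatedly gives $v\,e_1^{(m-1)}=\sum_{l=2}^{n-m+1} c_l\, e_{l+m-1}$, where no term is lost since $l+m-1\leq n$ throughout this range. In every other summand ($i\neq j$) the leading factor is $e_1$ while $v$ occupies a non-leading position; because $v$ has no $e_1$-component, $e_a v=0$ for every $a$, so each of these summands vanishes identically. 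Hence
\[ f(e_1,\dots,v,\dots,e_1) = \gamma_j \sum_{l=2}^{n-m+1} c_l\, e_{l+m-1}. \]

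Finally I would solve for the coefficients. As $l$ runs through $\{2,\dots,n-m+1\}$ the index $k=l+m-1$ runs exactly through $\{m+1,\dots,n\}$, so given an arbitrary target $w=\sum_{k=m+1}^n \mu_k e_k\in (L_n)^{m+1}$ it suffices to put $c_l=\mu_{l+m-1}/\gamma_j$, which is legitimate precisely because $\gamma_j\neq 0$. This yields $f(e_1,\dots,v,\dots,e_1)=w$, establishing $(L_n)^{m+1}\subseteq Im(f)$.

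The computation is routine once the substitution is chosen; the step I would treat as the main obstacle is the insistence that $v$ lie in $\mbox{span}\{e_2,\dots,e_n\}$, i.e.\ carry no $e_1$-component. This is exactly what forces all the cross terms (the summands with $i\neq j$) to collapse, leaving the surviving $j$-th term as an \emph{invertible} linear image of the free parameters $c_l$. Without it one would pick up an extra contribution along $e_m$ coming from the $e_1$-component of $v$, requiring a separate cancellation argument. Avoiding that bookkeeping matters because it lets a \emph{single} evaluation reach every element of $(L_n)^{m+1}$ at once, which is essential here since $Im(f)$ is not yet known to be closed under addition.
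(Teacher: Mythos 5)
Your proof is correct and follows essentially the same strategy as the paper: substitute $e_1$ in every position except the $j$-th, place there a vector $v$ with no $e_1$-component (so that all summands not led by $x_j$ vanish), and solve linearly for the coefficients of $v$ to hit an arbitrary target in $(L_n)^{m+1}$. The only cosmetic difference is that you evaluate the normal form (\ref{normalform}) with a general $v$ directly, while the paper first computes $f(e_1,\dots,e_i,\dots,e_1)=\gamma_j e_{i+m-1}$ for basis vectors $e_i$, $i\geq 2$, and then invokes multilinearity; your write-up also avoids the paper's slight indexing sloppiness in the definition of $v$.
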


\begin{proof}
    If $m \geq n$, the result is trivial since $(L_n)^{m+1}= 0$. So we assume $1 \leq m \leq n-1$. Let $u=\sum_{i=m+1}^n a_ie_i\in (L_n)^{m+1}$. We are going to show that there exists $j\in \{1, \dots, m\}$ and $v\in L$ such that 
    \[f(e_1, \dots, e_1, v, e_1, \dots, e_1) = u,\] where the $v$ above occurs in the $j$-th position.

    First we observe that if we replace $v$ above by $e_i$, for any $2\leq i\leq n-m+1$, we obtain \[f(e_1, \dots, e_1, e_i, e_1, \dots, e_1) = \left(\sum_{\sigma(1)=j}\alpha_\sigma\right)e_{i+m-1}.\]
    Since $\gamma_j= \sum_{\sigma(1)=j}\alpha_\sigma\neq 0$, by defining \[v=\frac{1}{\gamma_j}\sum_{i=1}^{n-m+1}a_{i+m-1}e_i,\] the multilinearity of $f$, implies that
    \[f(e_1, \dots, e_1, v, e_1, \dots, e_1) =\frac{1}{\gamma_j} \sum_{i=1}^na_{i+m-1} f(e_1, \dots, e_1, e_i, e_1, \dots, e_1) = \sum_{i=m}^na_ie_i =u. \]  This proves that $Im(f)\supseteq (L_n)^{m+1}$.
    \end{proof}

\begin{lemma}
    Let $f(x_1, \dots, x_m) = \sum_{\sigma\in S_m} \alpha_\sigma x_{\sigma(1)} \cdots x_{\sigma(m)}\in \mathcal{D}(X)$ be a multilinear polynomial. If $\gamma:=\sum_{\sigma\in S_m} \alpha_\sigma \neq 0$, then $Im(f) = (L_n)^m$. 
\end{lemma}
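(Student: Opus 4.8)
The plan is to establish the two inclusions $Im(f)\subseteq (L_n)^m$ and $(L_n)^m\subseteq Im(f)$. The first is immediate: each monomial $x_{\sigma(1)}\cdots x_{\sigma(m)}$ is a left-normed product of $m$ elements of $L_n$, so every value of $f$ lies in $(L_n)^m$. If $m>n$ this already finishes the proof, since then $(L_n)^m=0$ and every degree-$m$ product vanishes; hence I may assume $m\le n$. For the reverse inclusion the main point is the following observation: even though $Im(f)$ is not a priori a vector space, restricting $f$ to a family of evaluations in which a single variable varies and all the others are frozen at $e_1$ produces a \emph{linear} map, whose image is automatically a subspace and can be computed on the basis $e_1,\dots,e_n$.

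To carry this out, I would first use that $\gamma=\sum_{\sigma}\alpha_\sigma=\sum_{j=1}^m\gamma_j\neq 0$ to fix an index $j$ with $\gamma_j\neq 0$. Define $g\colon L_n\to L_n$ by $g(v)=f(e_1,\dots,e_1,v,e_1,\dots,e_1)$ with $v$ in the $j$-th slot; multilinearity of $f$ makes $g$ linear, and clearly $Im(g)\subseteq Im(f)$. Evaluating on the basis, setting every variable to $e_1$ gives $g(e_1)=f(e_1,\dots,e_1)=\gamma\,e_m$, because each monomial collapses to the left-normed product of $m$ copies of $e_1$, which equals $e_m$. For $i\ge 2$ the computation already made in the proof of Lemma \ref{gammajneq0} gives $g(e_i)=\gamma_j\,e_{i+m-1}$: only the monomials with $\sigma(1)=j$ survive, since a factor $e_i$ with $i\ge 2$ appearing after the first position annihilates the partial product. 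Thus $g(e_i)=\gamma_j\,e_{i+m-1}$ for $2\le i\le n-m+1$ and $g(e_i)=0$ for $i>n-m+1$.

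Because $\gamma\neq 0$ and $\gamma_j\neq 0$, these values give $Im(g)=\mbox{span}\{e_m,e_{m+1},\dots,e_n\}=(L_n)^m$, and then $(L_n)^m=Im(g)\subseteq Im(f)\subseteq (L_n)^m$ forces $Im(f)=(L_n)^m$. The one delicate step I expect to be the crux is precisely the linear-map packaging in the previous paragraph: it is tempting to argue that $\gamma e_m\in Im(f)$ (from the all-$e_1$ evaluation) together with $(L_n)^{m+1}\subseteq Im(f)$ (Lemma \ref{gammajneq0}) yields everything by addition, but this is illegitimate before one knows the image is a subspace. Realizing the $e_m$ direction (governed by $\gamma$) and the directions $e_{m+1},\dots,e_n$ (governed by $\gamma_j$) simultaneously as the image of the single linear map $g$ is what resolves this, and makes the argument rigorous without circularity.
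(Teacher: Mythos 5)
Your proof is correct and takes essentially the same approach as the paper: both arguments freeze every variable except the $j$-th (chosen so that $\gamma_j\neq 0$) at $e_1$, exploit linearity of $f$ in that single slot, and combine the evaluation $f(e_1,\dots,e_1)=\gamma e_m$ with the basis evaluations $e_i\mapsto \gamma_j e_{i+m-1}$ from the proof of Lemma \ref{gammajneq0}. The only difference is presentational — you package the restriction as a linear map $g$ and compute $Im(g)$ on a basis, while the paper writes down the explicit preimage $\frac{a_m}{\gamma}e_1+v$ in the $j$-th slot — and both correctly avoid the circularity of adding elements of $Im(f)$ before knowing it is a subspace.
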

\begin{proof}
    Of course, if $m>n, Im(f) = \{0\} = (L_n)^{m}$. Hence, we assume $m\leq n$.
    
    We clearly have $Im(f) \subseteq (L_n)^m$. Below we prove the reverse inclusion.
    
    Notice that since $0\neq \gamma = \sum_{j=1}^{m}\gamma_j$, there exists some $j$ such that $\gamma_j\neq 0$. 
    If $j$ is as above, let $w=\sum_{i=m}^na_ie_i\in (L_n)^m$, and let $u=\sum_{i=m+1}^na_ie_i$ if $m < n$, or $u = 0$ if $m = n$. Then $w=a_me_m + u$.

    From the proof of Lemma \ref{gammajneq0}, there exists $v\in L_n$ such that 
    \[f(e_1, \dots, e_1, v, e_1, \dots, e_1) = u,\] where the $v$ above occurs in the $j$-th position.
    Now, the multilinearity of $f$ implies that
    \[f(e_1, \dots, e_1, \frac{a_m}{\gamma}e_1 + v, e_1, \dots, e_1) = w,\] and the proof is complete.
\end{proof}

\begin{lemma}
    Let $f(x_1, \dots, x_m) = \sum_{\sigma\in S_m} \alpha_\sigma x_{\sigma(1)} \cdots x_{\sigma(m)}\in \mathcal{D}(X)$ be a multilinear polynomial. Assume $\gamma=\sum_{\sigma\in S_m} \alpha_\sigma= 0$, then 
    \begin{enumerate}
        \item If for some $j$ $\gamma_j\neq 0$, then $Im(f) = (L_n)^{m+1}$;
        \item If $\gamma_j=0$ for all $j$, then $Im(f) = \{0\}$, that is, $f$ is a polynomial identity
    \end{enumerate}
\end{lemma}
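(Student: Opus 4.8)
The plan is to analyze the two cases using the normal form (\ref{normalform}) for the multilinear polynomial $f$ modulo $\mbox{Id}(L_n)$, together with the computations already carried out in the proof of Lemma \ref{gammajneq0}. Throughout I may assume $1 \leq m \leq n-1$: if $m \geq n$ then $(L_n)^{m+1} = 0$ and, since $\gamma = 0$, the previous lemmas already force $f \in \mbox{Id}(L_n)$, so both conclusions hold trivially with $Im(f) = \{0\} = (L_n)^{m+1}$.

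For part (1), suppose $\gamma_j \neq 0$ for some $j$. The inclusion $Im(f) \supseteq (L_n)^{m+1}$ is exactly the content of Lemma \ref{gammajneq0}, so the only work is the reverse inclusion $Im(f) \subseteq (L_n)^{m+1}$. First I would invoke (\ref{normalform}) to write, modulo $\mbox{Id}(L_n)$, the expression $f(x_1, \dots, x_m) = \sum_{k=1}^m \gamma_k\, x_k x_1 \cdots \widehat{x_k} \cdots x_m$. Evaluating on arbitrary elements $a_i = \sum_t a_i^{(t)} e_t \in L_n$, every left-normed monomial of degree $m$ lands in $(L_n)^m = \mbox{span}\{e_m, \dots, e_n\}$; the coefficient of $e_m$ in such a monomial is the product of the $e_1$-coordinates of its factors. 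Since the monomial $x_k x_1 \cdots \widehat{x_k} \cdots x_m$ uses each variable exactly once, the total $e_m$-coefficient of $f(a_1, \dots, a_m)$ equals $\left(\sum_{k=1}^m \gamma_k\right) \prod_{i=1}^m a_i^{(1)} = \gamma \prod_i a_i^{(1)} = 0$, because $\gamma = 0$. Hence every value of $f$ has zero $e_m$-component, i.e. $Im(f) \subseteq \mbox{span}\{e_{m+1}, \dots, e_n\} = (L_n)^{m+1}$, giving equality.

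For part (2), suppose $\gamma_j = 0$ for all $j$. Again by (\ref{normalform}), $f \equiv \sum_{j=1}^m \gamma_j\, x_j x_1 \cdots \widehat{x_j} \cdots x_m \equiv 0 \pmod{\mbox{Id}(L_n)}$, so $f$ itself is a polynomial identity of $L_n$ and $Im(f) = \{0\}$. The one point needing care here is the legitimacy of passing to the normal form: this rests on Proposition \ref{consequences}, which shows that the reordering identities used to derive (\ref{normalform}) lie in $I \subseteq \mbox{Id}(L_n)$, so that congruence modulo $\mbox{Id}(L_n)$ is exactly what is needed.

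The main obstacle I anticipate is the $e_m$-coefficient computation in part (1): one must verify carefully that no left-normed monomial of multidegree $(1,\dots,1)$ contributes to the $e_m$-component except through the product of first coordinates, which amounts to checking that any factor evaluated with a nonzero $e_t$-coordinate for $t \geq 2$ pushes the product strictly past degree $m$ (and hence beyond $e_m$). This follows from the multiplication rule $e_i e_1 = e_{i+1}$ and the fact that right-multiplying by anything other than $e_1$ annihilates, but it is the step where the structure of $L_n$ is genuinely used rather than just the normal form.
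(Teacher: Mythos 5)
Your proof is correct and takes essentially the same approach as the paper: part (2) and the containment $(L_n)^{m+1}\subseteq Im(f)$ in part (1) are handled exactly as the paper does (via the normal form (\ref{normalform}) and Lemma \ref{gammajneq0}, respectively). The only cosmetic difference is in the reverse inclusion of part (1): the paper opens brackets by multilinearity and observes that the only evaluation on basis elements that could contribute to $e_m$ is $f(e_1,\dots,e_1)=\gamma e_m=0$, while you pass to the normal form first and compute the $e_m$-coefficient of a general evaluation as $\gamma\prod_{i=1}^m a_i^{(1)}=0$ --- the same key observation, phrased through the canonical form instead of through bracket-opening.
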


\begin{proof}
    \begin{enumerate}
        \item From Lemma \ref{gammajneq0}, we have $Im(f)\supseteq (L_n)^{m+1}$. Let us prove now the reverse inclusion. Let $v_1, \dots, v_m\in L_n$. For each $k\in \{1, \dots, m\}$, write $v_k = \sum_{l=1}^n a_{kl}e_l$, with $a_{kl}\in K$. Then, opening brackets, we obtain that $f(v_1, \dots, v_m)$ is a linear combination of evaluations of $f$ on basic elements. Notice now that $f(e_1, \dots, e_1) = \left(\sum_{\sigma\in S_m} \alpha_\sigma\right) e_m = \gamma e_m = 0$, and any other such evaluation on basic elements contains at least one basic element different from $e_1$, which implies that each such evaluation will be a multiple of $e_i$, for some $i>m$. As a consequence, $Im(f)\subseteq (L_n)^{m+1}$. 
        
        \item This is an immediate when we write $f$ modulo $Id(L_n)$ as in (\ref{normalform}).

    \end{enumerate}
\end{proof}

By the above Lemmas we obtain that the analogous to the L'vov-Kaplansky conjecture for the null-filiform Leibniz algebras of any dimension is true.

\begin{theorem}\label{multilinear}
    Let $f(x_1, \dots, x_m)$ be a multilinear polynomial in the free algebra of the variety of Leibniz algebras. Then the image of $f$ on $L_n$ is $\{0\}$, $(L_n)^m$ or $(L_n)^{m+1}$. In particular, the image of $f$ is a vector subspace of $L_n$.
\end{theorem}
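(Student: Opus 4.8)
The plan is to observe that Theorem \ref{multilinear} follows immediately from the three preceding lemmas, once they are organized into a trichotomy governed by the two families of scalars attached to $f$, namely $\gamma = \sum_{\sigma \in S_m}\alpha_\sigma$ and, for each $j$, $\gamma_j = \sum_{\sigma(1)=j}\alpha_\sigma$. Before the case analysis I would record the elementary remark that $(L_n)^m$ and $(L_n)^{m+1}$ are themselves vector subspaces of $L_n$: each term $(L_n)^k$ of the lower central series is an ideal, hence in particular a subspace. Consequently, whatever the outcome of the analysis, as soon as the image is identified with one of $\{0\}$, $(L_n)^m$, $(L_n)^{m+1}$, the final assertion that $Im(f)$ is a subspace comes for free.

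The core of the argument is then a split into three mutually exclusive and exhaustive cases. If $\gamma \neq 0$, the second lemma gives directly $Im(f) = (L_n)^m$. If $\gamma = 0$, I would subdivide according to the $\gamma_j$: when some $\gamma_j \neq 0$, part (1) of the third lemma yields $Im(f) = (L_n)^{m+1}$; when instead $\gamma_j = 0$ for all $j$, part (2) of that lemma yields $Im(f) = \{0\}$, i.e.\ $f \in \mbox{Id}(L_n)$. Since these cases cover every possibility for a multilinear $f$, this establishes the claimed trichotomy and hence the theorem.

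Because each case was already fully settled inside the lemmas, there is no genuine obstacle remaining at the level of the theorem itself; all the real content sits in the normal form (\ref{normalform}), which reduces $f$ modulo $\mbox{Id}(L_n)$ to an expression depending only on $\gamma$ and the $\gamma_j$. The single point I would take care to verify explicitly is that these scalars are indeed the only relevant invariants and that the three cases are exhaustive. This is precisely what (\ref{normalform}) guarantees: modulo $\mbox{Id}(L_n)$ the polynomial $f$ collapses to $\sum_{j} \gamma_j\, x_j x_1 \cdots \widehat{x_j} \cdots x_m$ when $m < n$ and to $\gamma\, x_1 \cdots x_m$ when $m = n$, so the behaviour of $Im(f)$ is completely controlled by whether $\gamma$ and the $\gamma_j$ vanish.
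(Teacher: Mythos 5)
Your proposal is correct and matches the paper's own argument: the paper states Theorem \ref{multilinear} as an immediate consequence of the three preceding lemmas, organized into exactly the trichotomy you describe (namely $\gamma \neq 0$; $\gamma = 0$ with some $\gamma_j \neq 0$; all $\gamma_j = 0$). Your added remark that each $(L_n)^k$ is a subspace, so the ``in particular'' clause is automatic, is a harmless and correct elaboration of what the paper leaves implicit.
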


\section{Images of multihomogeneous polynomials}

In this section we will consider the image of a multihomogeneous polynomial $f(x_1, \dots, x_m)$ over an $n$-dimensional null-filiform Leibniz algebra $L_n$. It turns out that in general the image of such a polynomial is not a vector subspace of $L_n$.

Let $(d_1, \dots, d_m)$ be the multidegree of $f$ and let $d=d_1+\cdots +d_m$ (here we are assuming that each variable occours at least once, i.e., $d_i\geq 1$ for each $i$). By Lemma \ref{relativelyfree}, there exist $\alpha_1, \dots, \alpha_m\in K$ such that, modulo $Id(L_n)$, $f$ may be written as
\begin{equation}\label{fhomogeneous}
f(x_1, \dots, x_m) = \sum_{j=1}^m \alpha_j x_jx_1^{(d_1)} \cdots x_{j-1}^{(d_{j-1})}x_j^{(d_j-1)}x_{j+1}^{(d_{j+1})}\cdots x_m^{(d_m)}.
\end{equation}
In particular, if $d \geq n + 1$ we can choose $\alpha_1, \dots, \alpha_m = 0$ since $f(x_1, \dots, x_m) \in Id(L_n)$, and if $d = n$, we can choose $\alpha_2, \dots, \alpha_m = 0$.

We will consider evaluations of such polynomials by elements of $L_n$. Before that, we need some technical results.

\begin{lemma}\label{product}
    Let $z=\sum_{i=1}^n a_i e_i$ and $w=\sum_{i=1}^n b_ie_i\in L_n$. Then for any $1 \leq s \leq n-1$
    \[zw^{(s)} = b_1^s \sum_{i=1}^{n-s}a_ie_{i+s}.\]
\end{lemma}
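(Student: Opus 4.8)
The plan is to prove the formula by induction on $s$, the engine being a single direct computation of the product $zw$ for arbitrary $z,w \in L_n$. Using Theorem \ref{basis} and Remark \ref{nobrackets}, I would first observe that since $e_ie_j = 0$ unless $j = 1$ and $i \le n-1$, for $z = \sum_{i=1}^n a_i e_i$ and $w = \sum_{i=1}^n b_i e_i$ only the terms carrying the $e_1$-component of $w$ survive, giving
\[
zw = \sum_{i=1}^n \sum_{j=1}^n a_i b_j\, e_i e_j = b_1 \sum_{i=1}^{n-1} a_i e_{i+1}.
\]
This is precisely the claimed identity for $s = 1$, and it already isolates the two phenomena that drive the whole statement: multiplying on the right by $w$ scales every coefficient by $b_1$ and shifts each basis vector $e_i$ to $e_{i+1}$, while the top coefficient $a_n$ is annihilated because $e_n e_1 = 0$.

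For the inductive step, I would assume $zw^{(s)} = b_1^s \sum_{i=1}^{n-s} a_i e_{i+s}$ for some $1 \le s \le n-2$. Then $zw^{(s+1)} = (zw^{(s)})w$, and I would apply the base-case computation to the element $y := zw^{(s)}$, whose $e_k$-coefficient equals $b_1^s a_{k-s}$ for $s+1 \le k \le n$ and vanishes otherwise. Applying the formula for $yw$ shifts each surviving $e_k$ to $e_{k+1}$, multiplies by $b_1$, and discards the $e_n$-term; after the reindexing $k = i + s$ this yields $b_1^{s+1}\sum_{i=1}^{n-s-1} a_i e_{i+(s+1)}$, which is exactly the desired expression with $s$ replaced by $s+1$.

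This argument is essentially bookkeeping, so I do not anticipate any genuine obstacle. The only point demanding care is the boundary of the summation: each application of the base formula lowers the upper index by one (from $n-s$ to $n-s-1$), reflecting that the highest-indexed basis vector is killed at every step, and keeping the range of $i$ correct through the substitution $k = i+s$ is where a careless computation could go wrong.
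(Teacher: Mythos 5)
Your proof is correct and is essentially the paper's argument made explicit: the paper compresses the same idea into one chain of equalities, noting that $zw^{(s)} = z(b_1e_1)^{(s)} = b_1^s\sum_{i=1}^{n-s}a_i(e_ie_1^{(s)}) = b_1^s\sum_{i=1}^{n-s}a_ie_{i+s}$, which is exactly your observation that only the $e_1$-component of $w$ survives, with the $b_1$-scaling and index shift iterated $s$ times. Your induction just unrolls this computation step by step, with the same boundary bookkeeping.
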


\begin{proof} The proof follows from direct computations:
    \[\displaystyle zw^{(s)} = z(b_1e_1)^{(s)} = b_1^s\left(\sum_{i=1}^na_ie_i\right)e_1^{(s)} = b_1^s\sum_{i=1}^{n-s}a_i(e_{i}e_1^{(s)}) = b_1^s\sum_{i=1}^{n-s}a_ie_{i+s}.\]
\end{proof}

Now let $j\in\{1, \dots, m\}$ and let us denote $a_{1,i}$ by $b_i$. 
Under the evaluation $x_k\mapsto \sum_{i=1}^{n} a_{i,k}e_i$, applying the above lemma $m$ times, we obtain
\begin{align*}
    & x_jx_1^{(d_1)} \cdots x_{j-1}^{(d_{j-1})}x_j^{(d_j-1)}x_{j+1}^{(d_{j+1})}\cdots x_m^{(d_m)}\\ = & b_1^{d_1}\cdots b_{j-1}^{d_{j-1}}b_j^{d_j-1}b_{j+1}^{d_{j+1}}\cdots b_m^{d_m}\sum_{i=1}^{n-d+1}a_{i,j}e_{i+d-1}.
\end{align*}
As a consequence, under such evaluation, $f$ becomes
\begin{align}
    & \prod_{l=1}^{m}b_l^{d_l-1}\sum_{j=1}^{m}\alpha_j b_1 \cdots \hat{b_j} \cdots b_m \sum_{i=1}^{n-d+1}a_{i,j}e_{i+d-1} \nonumber \\
    = & \prod_{l=1}^{m}b_l^{d_l} \left(\sum_{j=1}^m \alpha_j\right)e_d + \prod_{l=1}^{m}b_l^{d_l-1}\sum_{j=1}^{m}\alpha_j b_1 \cdots \hat{b_j} \cdots b_m \sum_{i=2}^{n-d+1}a_{i,j}e_{i+d-1} \label{evaluation}
\end{align}

Now we have two cases to consider, depending on whether $\left(\sum_{j=1}^m \alpha_j\right)$ is zero or not.

Let us consider first the case in which the sum of the coefficients of $f$ in (\ref{fhomogeneous}) is zero.

\begin{lemma}\label{sumzero}
    Let $f(x_1, \dots, x_m)$ be a multihomogeneous polynomial of multidegree $(d_1, \dots, d_m)$ such that $f$ is not a polynomial identity for $L_n$. Let $d=d_1+\cdots +d_m$ and write $f$ as in (\ref{fhomogeneous}). If $\sum_{j=1}^m\alpha_j = 0$, then the image of $f$ is $(L_n)^{d+1}$. In particular, the image of $f$ is a vector subspace of $L_n$.    
\end{lemma}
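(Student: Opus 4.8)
The plan is to show both inclusions $\mathrm{Im}(f)\subseteq (L_n)^{d+1}$ and $(L_n)^{d+1}\subseteq\mathrm{Im}(f)$. The first inclusion should be immediate from the evaluation formula \eqref{evaluation}: since $\sum_{j=1}^m\alpha_j=0$, the term $\prod_l b_l^{d_l}\bigl(\sum_j\alpha_j\bigr)e_d$ vanishes, so every evaluation of $f$ is a linear combination of the vectors $e_{i+d-1}$ with $i\geq 2$, i.e. of $e_d,\dots$ wait—with $i\ge 2$ we get $e_{i+d-1}$ for $i+d-1\geq d+1$. Hence $f(v_1,\dots,v_m)\in\mathrm{span}\{e_{d+1},\dots,e_n\}=(L_n)^{d+1}$, which gives the easy inclusion.

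**The reverse inclusion.**
For $(L_n)^{d+1}\subseteq\mathrm{Im}(f)$, first note that since $f$ is not an identity, by Theorem \ref{id(L_n)} we must have $d\leq n-1$ (otherwise $f\in\mathrm{Id}(L_n)$), and moreover not all $\alpha_j$ are zero. I would fix an index $j_0$ with $\alpha_{j_0}\neq 0$. The strategy is to specialize all variables except $x_{j_0}$ to $e_1$, and let $x_{j_0}$ range over $L_n$; this collapses the double sum in \eqref{evaluation} to a single controllable term. Under $x_k\mapsto e_1$ for $k\neq j_0$ one has $b_l=1$ for $l\neq j_0$ and $a_{i,k}=\delta_{i,1}$, so the surviving contribution from \eqref{evaluation} (using $\sum_j\alpha_j=0$ to kill the $e_d$ term) reduces to $\alpha_{j_0}\sum_{i=2}^{n-d+1}a_{i,j_0}e_{i+d-1}$ times a power of $b_{j_0}=a_{1,j_0}$. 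Choosing $a_{1,j_0}=1$ for simplicity, feeding in an arbitrary target $u=\sum_{i=d+1}^n c_i e_i\in (L_n)^{d+1}$ amounts to solving $\alpha_{j_0}a_{i,j_0}=c_{i+d-1}$ for each $i\geq 2$, which is possible since $\alpha_{j_0}\neq 0$. This realizes every element of $(L_n)^{d+1}$ as a value of $f$.

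**The main obstacle.**
The delicate point will be bookkeeping the index shifts in \eqref{evaluation} so that the surjection onto $(L_n)^{d+1}$ is clean: I must verify that as $i$ ranges over $2\leq i\leq n-d+1$ the exponents $i+d-1$ range exactly over $d+1,\dots,n$, hitting each basis vector of $(L_n)^{d+1}$ exactly once, and that no lower-index $e_i$ ($i\leq d$) reappears once the $e_d$ term is cancelled. A secondary subtlety is confirming that the prefactor $\prod_{l\neq j_0}b_l^{d_l-1}\cdot b_{j_0}^{\,d_{j_0}-1}$ (a power of the chosen $a_{1,j_0}$) is a nonzero scalar under my specialization, so it can be absorbed; choosing $a_{1,j_0}=1$ makes it $1$ and avoids dividing by a possibly zero quantity. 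Once these index and scaling details are checked, the two inclusions combine to give $\mathrm{Im}(f)=(L_n)^{d+1}$, and since this is a coordinate subspace it is automatically a vector subspace, completing the proof.
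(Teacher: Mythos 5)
Your proposal is correct and follows essentially the same route as the paper's proof: the containment $\mathrm{Im}(f)\subseteq (L_n)^{d+1}$ read off from equation (\ref{evaluation}) after the $e_d$ term cancels, and the reverse inclusion by specializing every variable except one with nonzero coefficient (the paper takes $j_0=1$ without loss of generality) to $e_1$, normalizing $a_{1,j_0}=1$, and solving $\alpha_{j_0}a_{i,j_0}=c_{i+d-1}$ coefficientwise. Your index bookkeeping ($i+d-1$ ranging over $d+1,\dots,n$) and the observation that $1\le d\le n-1$ match the paper's argument as well.
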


\begin{proof}
     Since $f$ is not a polynomial identity for $L_n$ and $\sum_{j=1}^m\alpha_j = 0$ then $1 \leq d \leq n-1$ and there exists $j$ such that $\alpha_j\neq0$. Without loss of generality, we may assume $\alpha_1\neq0$. From equation (\ref{evaluation}), one easily sees that since $\sum_{j=1} ^m \alpha_j=0$, then the image of $f$ is contained in $(L_n)^{d+1}$. We will show now the reverse inclusion.
    
     Let $u=\sum_{j=d+1}^n\beta_je_j\in (L_n)^{d+1}$. Now consider the substitution $x_i\mapsto e_1$, if $i>1$ and $x_1\mapsto \sum_{j=1}^na_ie_i$.
    Then in equation (\ref{evaluation}), $f$ becomes
    \[a_1^{d_1-1}\alpha_1\sum_{i=2}^{n-d+1}a_ie_{i+d-1} \]
    Now if one sets $a_1:=1$ and $a_i:=\frac{\beta_{i+d-1}}{\alpha_1}$, for $i\geq 2$, we obtain the given element $u$ in $(L_n)^{d+1}$. Hence $Im(f) = (L_n)^{d+1}$. 
\end{proof}

We now consider the case in which the sum of the coefficients of $f$ is nonzero. If $K$ is a field we will denote by $\dot{K}$ the set of nonzero elements of $K$.

\begin{lemma}\label{sumnonzero}
       Let $f(x_1, \dots, x_m)$ be a multihomogeneous polynomial of multidegree $(d_1, \dots, d_m)$ such that $f$ is not a polynomial identity for $L_n$ over an algebraically closed field $K$. Let $d=d_1+\cdots +d_m$ and write $f$ as in (\ref{fhomogeneous}). If $\gamma:=\sum_{j=1}^m\alpha_j \neq 0$ then 
       \[Im f\supseteq \{0\}\cup (\dot{K}\cdot e_d+(L_n)^{d+1}).\]
\end{lemma}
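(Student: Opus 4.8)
The plan is to exhibit, for each prescribed element $c\,e_d + u$ with $c \in \dot{K}$ and $u \in (L_n)^{d+1}$, an explicit evaluation of $f$ producing it, and to observe that $0 = f(0, \dots, 0)$ already lies in the image since $f$ is homogeneous of positive degree. The starting point is the expression (\ref{evaluation}): under a substitution $x_k \mapsto \sum_{i=1}^n a_{i,k} e_i$ with $b_k := a_{1,k}$, the value of $f$ splits as a leading term $\gamma \prod_{l=1}^m b_l^{d_l}\, e_d$ lying along $e_d$, plus a tail in $(L_n)^{d+1}$ whose coefficients are linear in the higher coordinates $a_{i,k}$ (for $i \geq 2$). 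These two blocks are governed by essentially independent data, and exploiting this decoupling is the heart of the argument.

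Since $\gamma = \sum_j \alpha_j \neq 0$, fix an index $j_0$ with $\alpha_{j_0} \neq 0$. First I would collapse every other variable by setting $x_l \mapsto e_1$ for $l \neq j_0$ (so $b_l = 1$ and all higher coordinates of $x_l$ vanish), and substitute $x_{j_0} \mapsto t\,e_1 + \sum_{i \geq 2} a_{i,j_0} e_i$. With these choices the leading term becomes $\gamma\, t^{d_{j_0}}\, e_d$, while in (\ref{evaluation}) the tail collapses, since the inner sums attached to $j \neq j_0$ vanish, to $\alpha_{j_0}\, t^{d_{j_0}-1} \sum_{i=2}^{n-d+1} a_{i,j_0}\, e_{i+d-1}$.

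To realize the leading coefficient I would solve $\gamma\, t^{d_{j_0}} = c$; here the hypothesis that $K$ is algebraically closed enters, guaranteeing a root $t$, necessarily nonzero because $c \neq 0$. This is precisely where algebraic closure is indispensable, and it explains why the target set is $\dot{K}\cdot e_d$ rather than all of $K e_d$. With such a $t$ fixed, the scalar $\alpha_{j_0}\, t^{d_{j_0}-1}$ is nonzero, so the tail becomes a free linear parametrization of $(L_n)^{d+1} = \mathrm{span}\{e_{d+1}, \dots, e_n\}$: writing $u = \sum_{i=2}^{n-d+1} \beta_{i+d-1}\, e_{i+d-1}$, the choice $a_{i,j_0} = \beta_{i+d-1}/(\alpha_{j_0}\, t^{d_{j_0}-1})$ produces exactly $u$, whence $f$ evaluates to $c\,e_d + u$.

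The main obstacle is conceptual rather than computational: one must recognize that the single variable $x_{j_0}$ suffices to steer both blocks simultaneously, with its $e_1$-coordinate $t$ dialing in the leading term and its remaining coordinates sweeping out the entire tail. What makes the decoupling clean is that the higher coordinates of $x_{j_0}$ do not disturb the leading $\gamma\, t^{d_{j_0}}$ term, and that the multiplier $\alpha_{j_0}\, t^{d_{j_0}-1}$ attached to them is automatically nonzero once $t \neq 0$. A minor point to check is the degenerate case $d = n$, where $(L_n)^{d+1} = 0$ and the tail is empty, so the construction simply reproduces $\dot{K}\cdot e_n$.
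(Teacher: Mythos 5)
Your proof is correct and takes essentially the same route as the paper's: collapse every variable except one with nonzero coefficient $\alpha_{j_0}$ to $e_1$, use algebraic closure to choose the $e_1$-coordinate of $x_{j_0}$ so that the leading coefficient along $e_d$ matches, and then solve linearly for the tail in $(L_n)^{d+1}$ using that the multiplier $\alpha_{j_0}t^{d_{j_0}-1}$ is nonzero. Incidentally, your extraction of a $d_{j_0}$-th root is the correct exponent; the paper's text says ``$d$-th root,'' which is a harmless typo.
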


\begin{proof}
    Of course that $0$ is in the image. Since $\sum_{j=1}^m\alpha_j\neq 0$, there exists some $j$ such that $\alpha_j\neq0$. We assume without loss of generality that $\alpha_1\neq 0$.  
    
    Clearly we assume $1 \leq d \leq n$. Let $u=\sum_{j=d}^n\beta_je_j\in (L_n)^{d}$, with $\beta_d\neq0$ and consider again the substitution $x_i\mapsto e_1$, if $i>1$ and $x_1\mapsto \sum_{j=1}^na_ie_i$.
    Then in equation (\ref{evaluation}), $f$ becomes
    \[a_1^{d_1}\left(\sum_{j=1}^m \alpha_j\right)e_d + a_1^{d_1-1}\alpha_1 \sum_{i=2}^{n-d+1}a_ie_{i+d-1}.\]
    By setting $a_1$ to be a $d$-th root of $\frac{\beta_d}{\gamma}$ (here we are using the fact that $K$ is algebraically closed) and $a_i=\frac{\beta_{i+d-1}}{\alpha_1 a_1^{d_1-1}}$, we obtain the given element $u$ in $(L_n)^d$.
\end{proof}

We are now in position to give a complete description of the image of an arbitrary multihomogeneous polynomial on an $n$-dimensional null-filiform Leibniz algebra.


\begin{theorem}\label{homogeneous}
    Let $K$ be an algebraically closed field and $L_n$ a $n$-dimensional null-filiform Leibniz algebra over $K$. Let  $f(x_1, \dots, x_m) \in \mathcal{D}(X)$ be an arbitrary multihomogeneous polynomial of multidegree $(d_1, \dots, d_m)$ and let $d=d_1+\cdots + d_m$. Write $f$ modulo $Id(L_n)$ as \[f(x_1, \dots, x_m) = \sum_{j=1}^m \alpha_j x_jx_1^{(d_1)} \cdots x_{j-1}^{(d_{j-1})}x_j^{(d_j-1)}x_{j+1}^{(d_{j+1})}\cdots x_m^{(d_m)}.\] 
    \begin{enumerate}
    \item If $f$ is a polynomial identity for $L_n$ then the image of $f$ is $\{0\}$.
        \item If  $f$ is not a polynomial identity for $L_n$ and $\sum_{i=1}^m \alpha_i = 0$ then the image of $f$ on $L_n$ is $(L_n)^{d+1}$.
        \item If $f$ is not a polynomial identity for $L_n$ and $\sum_{i=1}^m \alpha_i \neq  0$ then
        \begin{enumerate}
            \item If there exists $j$ such that $d_j=1$ and $\alpha_j\neq0$, then the image of $f$ on $L_n$ is $(L_n)^{d}$;
            \item If $\alpha_j=0$, for all $j$ such that $d_j=1$ then the image of $f$ on $L_n$ is $\{0\} \cup (\dot{K}\cdot e_d +(L_n)^{d+1})$. 
        \end{enumerate}
    \end{enumerate}
\end{theorem}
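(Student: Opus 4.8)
The plan is to dispatch the four cases separately, leaning on the evaluation formula (\ref{evaluation}) and on the two preceding lemmas, which already supply one inclusion in each nontrivial case. Case (1) is immediate, since a polynomial identity evaluates to $0$ everywhere and so has image $\{0\}$; and Case (2) is precisely the conclusion of Lemma \ref{sumzero}, so nothing new is required there. For Case (3) I would fix the standing hypothesis $\gamma := \sum_{j=1}^m \alpha_j \neq 0$ and record at the outset that Lemma \ref{sumnonzero} already yields $Im f \supseteq \{0\} \cup (\dot{K} e_d + (L_n)^{d+1})$; this is exactly the ``$\supseteq$'' direction in (3b) and covers part of the image in (3a). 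In all of Case (3) the reverse inclusion $Im f \subseteq (L_n)^d$ is clear from (\ref{evaluation}), since every evaluation of $f$ lands in $\mathrm{span}\{e_d, e_{d+1}, \dots, e_n\}$.

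In subcase (3a), where some index $j$ satisfies $d_j = 1$ and $\alpha_j \neq 0$, it remains to prove $(L_n)^{d+1} \subseteq Im f$, because together with the set from Lemma \ref{sumnonzero} this exhausts $(L_n)^d = (\dot{K} e_d + (L_n)^{d+1}) \cup (L_n)^{d+1}$. I would use the substitution $x_i \mapsto e_1$ for $i \neq j$ and $x_j \mapsto \sum_{i=1}^n a_i e_i$. Since $d_j = 1$, the factor $\prod_l b_l^{d_l - 1}$ collapses to $1$, and every term with $k \neq j$ in the second sum of (\ref{evaluation}) vanishes (those variables contribute only $e_1$), so $f$ reduces to $a_1 \gamma e_d + \alpha_j \sum_{i=2}^{n-d+1} a_i e_{i+d-1}$. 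Setting $a_1 = 0$ and then solving for the remaining $a_i$ produces any prescribed element of $(L_n)^{d+1}$, giving the missing inclusion. Note that this step needs no roots, so the algebraic closure of $K$ enters only through the invocation of Lemma \ref{sumnonzero}.

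The genuinely new work is subcase (3b), where $\alpha_j = 0$ for every $j$ with $d_j = 1$; here I must establish the reverse inclusion $Im f \subseteq \{0\} \cup (\dot{K} e_d + (L_n)^{d+1})$, i.e. that every nonzero value of $f$ has a nonzero $e_d$-coefficient. From (\ref{evaluation}) the $e_d$-coefficient of an arbitrary evaluation is $\gamma \prod_l b_l^{d_l}$, so if it vanishes then, as $\gamma \neq 0$, some $b_{l_0} = 0$. The key step is a coefficient analysis of the remaining part $\prod_l b_l^{d_l-1} \sum_k \alpha_k (b_1 \cdots \hat{b}_k \cdots b_m) \sum_{i\geq 2} a_{i,k} e_{i+d-1}$: in the $k$-th term the total exponent of $b_l$ is $d_l$ for $l \neq k$ and $d_k - 1$ for $l = k$, so a term can survive only when $b_l \neq 0$ for all $l \neq k$ and, if $b_k = 0$, only when $d_k = 1$. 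Thus if two or more of the $b_l$ vanish every term dies; and if exactly $b_{l_0} = 0$, then only the term $k = l_0$ could survive, and only when $d_{l_0} = 1$ — but the hypothesis of (3b) then forces $\alpha_{l_0} = 0$, killing it too. Hence the whole evaluation is $0$, which is exactly the required inclusion.

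This coefficient bookkeeping is the main obstacle: the delicate point is recognizing that the hypothesis ``$\alpha_j = 0$ whenever $d_j = 1$'' is precisely what annihilates the one term that could otherwise survive when a single $b_l$ vanishes. Once that is isolated, the remaining cases split cleanly according to how many of the $b_l$ are zero, and everything else reduces to a direct substitution into (\ref{evaluation}).
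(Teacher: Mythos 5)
Your proposal is correct and follows essentially the same route as the paper: cases (1) and (2) are handled by the same observations and by Lemma \ref{sumzero}, case (3a) by the substitution $x_i\mapsto e_1$ for $i\neq j$ and $x_j\mapsto\sum_i a_ie_i$ in formula (\ref{evaluation}), and case (3b) by showing that any evaluation whose $e_d$-coefficient vanishes must be identically zero, using that $\alpha_j=0$ whenever $d_j=1$. If anything, your bookkeeping in (3b) (distinguishing exactly one $b_l=0$ from two or more, and noting which single term could survive) is written more carefully than the paper's own terse justification, but it is the same argument in substance.
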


\begin{proof}
    If $\sum_{j=1}^m\alpha_j=0$, we proved in Lemma \ref{sumzero} that the image of $f$ in $L_n$ is $(L_n)^{d+1}$.
    
    So, assume $\sum_{j=1}^m\alpha_j\neq 0$. By Lemma \ref{sumnonzero} the image of $f$ contains $\{0\}\cup(\dot{K}\cdot e_d+(L_n)^{d+1})$. Assume there exists $j$ such that $d_j=1$ and $\alpha_j\neq0$. Again we may assume $j=1$. 
    Applying the same substitution of variables as in the proof of Lemma \ref{sumnonzero}, and considering that $d_1=1$, we obtain  \[a_1\left(\sum_{j=1}^m \alpha_j\right)e_d +\alpha_1 \sum_{i=2}^{n-d+1}a_ie_{i+d-1}.\]
    Now if $u=\sum_{j=d}^n\beta_je_j\in (L_n)^d$, we set $a_1:=\frac{\beta_{d}}{\sum_{j=1}^m\alpha_j}$ and $a_i=\frac{\beta_i+d-1}{\alpha_1}$, for $i\geq 2$ and we obtain $u$ as an evaluation of $f$ on such elements. This proves (2)(a).
    
    Finally, it remains to prove that in case $2(b)$ if a nonzero element $u = \sum_{j=d}^n\beta_je_j\in (L_n)^d$ is in the image of $f$ then $\beta_d\neq 0$. 
    The most general evaluation of $f$ to consider is the one given in equation (\ref{evaluation}), where we obtain  
    \[\prod_{l=1}^{m}b_l^{d_l} \left(\sum_{j=1}^m \alpha_j\right)e_d + \prod_{l=1}^{m}b_l^{d_l-1}\sum_{j=1}^{m}\alpha_j b_1 \cdots \hat{b_j} \cdots b_m \sum_{i=2}^{n-d+1}a_{i,j}e_{i+d-1}.\]
    Now one can realize that the only way to the coefficient of $e_d$ to be zero is when $b_j=0$ for some $j$. But in this case, since $d_j\geq 2$ if $\alpha_j\neq 0$, we obtain that $\prod_{l=1}^m b_l^{d_l-1}\alpha_j=0$ for all $j$ and the resulting substitution is zero, i.e., any nonzero element $u$ in the image of $f$ has $\beta_d\neq 0$.
\end{proof}

\begin{remark}
    By the last theorem, the image of a multihomogeneous polynomial $f$ written modulo $Id(L_n)$ as in (\ref{fhomogeneous}) is a vector subspace of $L_n$ if and only if $\sum_{j=1}^m\alpha_j=0$ or if $\sum_{j=1}^m\alpha_j\neq 0$ and there exists some $j$ such that $d_j=1$ and $\alpha_j\neq 0$.
\end{remark}

\section{An analog
of null-filiform Leibniz algebras in the infinite-dimensional case}

In this final section we consider an analog
of null-filiform Leibniz algebras in the infinite-dimensional case. For that we consider $L_\infty$ as the algebra spanned by the set $\{e_1, e_2, \dots\}$ with table of  multiplication given by $e_ie_1=e_{i+1}$ for all $i\geq 1$ (with all other products being zero).

Such algebra has been studied before by Omirov in \cite{Omirov_Infinite_Null} and it is a \emph{potentially nilpotent} algebra, i.e., it satisfies $\bigcap_{i=1}^\infty (L_\infty)^{i} = \{0\}$.

\begin{theorem}\label{Id(L_infinite)}
Let $K$ be an infinite field and $L_{\infty}$ be  defined as above. Then
$$x_1(x_2x_3)$$
is a  basis of $\mbox{Id}(L_\infty).$
\end{theorem}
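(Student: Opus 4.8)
The plan is to establish the two inclusions $\langle x_1(x_2x_3)\rangle^T \subseteq \mathrm{Id}(L_\infty)$ and $\mathrm{Id}(L_\infty) \subseteq \langle x_1(x_2x_3)\rangle^T$, where I write $J := \langle x_1(x_2x_3)\rangle^T$. The first inclusion is a direct verification on the basis: since $e_je_k = e_{j+1}$ when $k=1$ and $e_je_k=0$ otherwise, the inner product $e_je_k$ is either $0$ or some $e_{j+1}$ with $j+1\geq 2$; in both cases $e_i(e_je_k)=0$ because $e_ie_{j+1}=0$. By bilinearity of the product, $x_1(x_2x_3)$ vanishes on all of $L_\infty$, so it is an identity and $J\subseteq\mathrm{Id}(L_\infty)$.

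For the reverse inclusion I would first record that the reductions (ii) and (iii) of Proposition \ref{consequences} use only the Leibniz identity together with (\ref{identidadeqq1}), hence they already hold modulo $J$; by contrast, consequence (i) of that proposition requires (\ref{identidadeqq3}) and is deliberately \emph{not} invoked, which is exactly what we want, since in $L_\infty$ there is no nilpotency to force long products to vanish. Consequently, by Remark \ref{nobrackets} (left-normed form) and part (iii) of Proposition \ref{consequences} (sorting of the tail), every left-normed monomial is congruent modulo $J$ to one with sorted tail. Since $K$ is infinite, any $f\in\mathrm{Id}(L_\infty)$ may be taken multihomogeneous of some multidegree $(d_1,\dots,d_m)$ and, exactly as in (\ref{fhomogeneous}) but now with \emph{no} upper bound on $d=d_1+\cdots+d_m$, rewritten modulo $J$ as $f \equiv \sum_{j=1}^m \alpha_j\, x_j x_1^{(d_1)}\cdots x_{j}^{(d_j-1)}\cdots x_m^{(d_m)}$.

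It then remains to show that each coefficient $\alpha_j$ vanishes. For this I would use Lemma \ref{product}, which holds verbatim in $L_\infty$ once the finite sums are replaced by their untruncated versions, to evaluate the canonical form as in (\ref{evaluation}). Fixing $j$ and substituting $x_j\mapsto e_1+e_2$ and $x_i\mapsto e_1$ for all $i\neq j$, every monomial whose leading index is different from $j$ contributes $e_d$, while the $j$-th monomial contributes $e_d+e_{d+1}$; the total evaluation is therefore $\left(\sum_{i=1}^m\alpha_i\right)e_d + \alpha_j e_{d+1}$. As $f$ is an identity this equals $0$, and since $e_d$ and $e_{d+1}$ are \emph{both} nonzero and linearly independent basis vectors of $L_\infty$, we conclude $\alpha_j=0$. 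Letting $j$ range over $\{1,\dots,m\}$ gives $f\equiv 0\pmod J$, i.e. $f\in J$; the degree-one case is trivial, as $f=\alpha x_k$ is an identity only when $\alpha=0$. This yields $\mathrm{Id}(L_\infty)=J$, so $x_1(x_2x_3)$ is a basis.

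The conceptual heart of the argument, and the step where it departs from the finite-dimensional Theorem \ref{id(L_n)}, is the permanent availability of the nonzero vector $e_{d+1}$. In $L_n$ the analogous substitution breaks down when $d=n$, precisely because $e_{n+1}=0$, which is why the extra identity (\ref{identidadeqq3}) is indispensable there; in $L_\infty$ that obstruction never arises, so the single identity $x_1(x_2x_3)$ already suffices. The main point requiring care is thus not a hard computation but the bookkeeping that (i) the normal form of $f$ is reached using (\ref{identidadeqq1}) alone (never consequence (i) of Proposition \ref{consequences}), and (ii) Lemma \ref{product} applies in $L_\infty$ with the index sums left untruncated; both are routine.
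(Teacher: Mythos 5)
Your proof is correct and takes essentially the same route as the paper: the paper likewise reduces modulo $\langle x_1(x_2x_3)\rangle^T$ to left-normed monomials with sorted tails of unbounded degree (the analogue of Lemma \ref{relativelyfree} without the degree cap) and then argues ``analogously to Theorem \ref{id(L_n)}'', i.e., via multihomogeneous reduction over the infinite field and the substitution $x_j\mapsto e_1+e_2$, $x_i\mapsto e_1$ for $i\neq j$. Your explicit bookkeeping --- that only (\ref{identidadeqq1}) is needed for the normal form, and that $e_{d+1}\neq 0$ in $L_\infty$ makes identity (\ref{identidadeqq3}) unnecessary --- is precisely what the paper leaves implicit.
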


\begin{proof}
    Let $I$ be the $T$-ideal generated by $x_1(x_2x_3)$. By computations similar to Lemma \ref{relativelyfree}, the quotient vector space $\frac{\mathcal{D}(X)}{I}$ is spanned by the set of all elements $ x_jx_{i_1}^{(d_1)}x_{i_2}^{(d_2)} \ldots x_{i_r}^{(d_r)} +I$ where $0 \leq d_1 + \cdots + d_r$, $1 \leq i_1 < \cdots < i_r$ and $j, r \geq 1$ and the proof follows analogously to the proof of Theorem \ref{id(L_n)}.
\end{proof}

In studying polynomial identities in algebras, an important role is played by the so called sequence of codimensions (see for instance \cite{GiambrunoZaicev}, in the case of associative algebras). More specifically if $A$ is an algebra satisfying some polynomial identity, we define its sequence of codimensions  $(c_m(A))_{m \geq 1 }$ as follows:
\[c_m(A)=\dim P_m(A), \ \ \mbox{where} \ \  P_m(A)=\frac{P_m}{P_m \cap Id(A)},\]
and $P_m$ is the vector subspace of $\mathcal{D}(X)$ of multilinear polynomials in the variables $x_1$, $x_2$, $\ldots$, $x_m$.

If the algebra is nilpotent, then the sequence of codimensions is bounded by a constant. In particular, it is not interesting to study the sequence of codimensions of the finite-dimensional null-filiform Leibniz algebras. But the algebra $L_\infty$ is a little be more interesting and we compute its sequence of codimensions below. 

\begin{corollary}
If $K$ is an infinite field, then 
\[c_m(L_\infty) = m\]
for every $m\geq 1$. 
\end{corollary}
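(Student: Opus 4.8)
The plan is to read off both bounds on $c_m(L_\infty)=\dim P_m(L_\infty)$ directly from the description of $\mathrm{Id}(L_\infty)$ supplied by Theorem \ref{Id(L_infinite)}. For the upper bound I would first recall that $x_1x_2x_3-x_1x_3x_2$ is a consequence of $x_1(x_2x_3)$ alone, since $(x_1x_2)x_3-(x_1x_3)x_2=x_1(x_2x_3)$ by the Leibniz identity; hence part (iii) of Proposition \ref{consequences}, being obtained from this relation by induction on $k$, already holds modulo $\mathrm{Id}(L_\infty)$. Applying it to a multilinear monomial $x_{\sigma(1)}\cdots x_{\sigma(m)}$, I can reorder every factor after the first into increasing order, so that modulo $\mathrm{Id}(L_\infty)$ the monomial equals $u_j:=x_jx_1x_2\cdots\widehat{x_j}\cdots x_m$ with $j=\sigma(1)$. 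Thus the $m$ cosets $u_1+\mathrm{Id}(L_\infty),\dots,u_m+\mathrm{Id}(L_\infty)$ span $P_m(L_\infty)$, and $c_m(L_\infty)\le m$.

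The real content is the reverse inequality, namely that $u_1,\dots,u_m$ are linearly independent modulo $\mathrm{Id}(L_\infty)$; this is the step I expect to be the main obstacle, and I would settle it with a well-chosen family of evaluations. The key structural observation is that in $L_\infty$ right multiplication by a vector $w=\sum_i b_i e_i$ depends only on $b_1$: indeed $zw=b_1\sum_i a_i e_{i+1}$ for $z=\sum_i a_i e_i$, so right multiplication by $e_1+e_2$ coincides with right multiplication by $e_1$. For each fixed $j$ I would use the substitution $S_j$ sending $x_j\mapsto e_1+e_2$ and $x_i\mapsto e_1$ for $i\neq j$. Evaluating $u_{j'}$ under $S_j$ with the help of Lemma \ref{product}: when $j'=j$ the leftmost factor is $e_1+e_2$ and all $m-1$ right multipliers are $e_1$, giving $e_m+e_{m+1}$; when $j'\neq j$ the leftmost factor is $e_1$ and the right multipliers—one of which is $e_1+e_2$—all act as $e_1$, giving $e_m$.

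Consequently a linear combination $\sum_{j'}\alpha_{j'}u_{j'}$ is sent by $S_j$ to $\bigl(\sum_{j'}\alpha_{j'}\bigr)e_m+\alpha_j e_{m+1}$. If this combination lies in $\mathrm{Id}(L_\infty)$, its image must vanish for every $j$, and reading off the coefficient of $e_{m+1}$ forces $\alpha_j=0$ for each $j$. This yields $c_m(L_\infty)\ge m$ and completes the computation $c_m(L_\infty)=m$ for all $m\ge1$. The one subtlety worth double-checking is precisely that these substitution values separate the monomials—that the coefficient of $e_{m+1}$ isolates a single $\alpha_j$—which is exactly what the observation on right multiplication guarantees.
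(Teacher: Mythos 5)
Your proof is correct and takes essentially the same route as the paper: the paper's proof of this corollary simply reads off from Theorem \ref{Id(L_infinite)} that the cosets of $x_1$ (for $m=1$) and of $x_j x_1\cdots\widehat{x_j}\cdots x_m$, $1\leq j\leq m$ (for $m\geq 2$), form a basis of $P_m(L_\infty)$, with spanning and linear independence coming from the proof of that theorem. In particular, your independence argument via the substitutions $x_j\mapsto e_1+e_2$, $x_i\mapsto e_1$ ($i\neq j$) is exactly the evaluation used in the paper's proof of Theorem \ref{id(L_n)}, to which the proof of Theorem \ref{Id(L_infinite)} defers; you have merely unpacked what the paper cites.
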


\begin{proof} If $K$ is an infinite field, by Theorem \ref{Id(L_infinite)}, a basis for the vector space $P_m(L_\infty)$ is the set of all elements $f+(P_m\cap Id(L_\infty))$ such that
$f=x_1$ if $m=1$ and $f = x_j x_1 \cdots \widehat{x_j} \cdots x_m$ with $1 \leq j \leq m $ if $m \geq 2$.
Therefore $c_m(L_\infty)= m$, for $m\geq 1$.
\end{proof}

In particular, the codimension sequence grows as a linear function. 

Finally, we remark that the arguments in the proofs of Theorem \ref{multilinear} and Theorem \ref{homogeneous} remain valid when considering the algebra $L_\infty$. Then we state the following results.

\begin{theorem}
    Let $f(x_1, \dots, x_m)$ be a multilinear polynomial in the free algebra of the variety of Leibniz algebras. Then the image of $f$ on $L_\infty$ is $\{0\}$, $(L_\infty)^m$ or $(L_\infty)^{m+1}$. In particular, the image of $f$ is a vector subspace of $L_\infty$.
\end{theorem}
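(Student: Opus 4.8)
The plan is to reproduce the argument of Theorem~\ref{multilinear} verbatim, checking that nothing in it depends on finite-dimensionality beyond the specific normal form, and replacing $L_n$ by $L_\infty$ throughout. The first step is to obtain a multilinear normal form modulo $Id(L_\infty)$. By Theorem~\ref{Id(L_infinite)} we have $Id(L_\infty)=\langle x_1(x_2x_3)\rangle^T$, and item (iii) of Proposition~\ref{consequences} is already a consequence of $x_1(x_2x_3)$ alone (its proof uses only (ii), which in turn follows from $x_1(x_2x_3)$). Hence, writing $\gamma_j:=\sum_{\sigma(1)=j}\alpha_\sigma$ and $\gamma:=\sum_{\sigma\in S_m}\alpha_\sigma=\sum_{j=1}^m\gamma_j$, every multilinear $f=\sum_{\sigma\in S_m}\alpha_\sigma x_{\sigma(1)}\cdots x_{\sigma(m)}$ reduces, modulo $Id(L_\infty)$, to
\[f(x_1,\dots,x_m)=\sum_{j=1}^m \gamma_j\, x_j x_1\cdots\widehat{x_j}\cdots x_m.\]
Unlike the case of $L_n$, there is no collapsing to a single monomial in top degree: since $L_\infty$ is infinite-dimensional, this form is valid for all $m$.

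The second step is to record the evaluation formulas. One first checks the infinite-dimensional analog of Lemma~\ref{product}: for $z=\sum_i a_ie_i$ and $w=\sum_i b_ie_i$ in $L_\infty$ one has $zw^{(s)}=b_1^s\sum_{i\geq 1}a_ie_{i+s}$, with no truncation. Applying this repeatedly under the evaluation $x_k\mapsto\sum_i a_{i,k}e_i$ and setting $b_i:=a_{1,i}$, the normal form becomes
\[f=\sum_{j=1}^m \gamma_j\Big(\textstyle\prod_{l\neq j}b_l\Big)\sum_{i\geq 1}a_{i,j}\,e_{i+m-1}.\]
The crucial observation is that the coefficient of $e_m$ equals $\big(\prod_{l}b_l\big)\sum_j\gamma_j=\gamma\prod_l b_l$, while all remaining terms lie in $(L_\infty)^{m+1}=\mathrm{span}\{e_{m+1},e_{m+2},\dots\}$.

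With these in hand, the argument splits into the same three cases as Theorem~\ref{multilinear}. If all $\gamma_j=0$ then $f\in Id(L_\infty)$ and $Im(f)=\{0\}$. If $\gamma\neq 0$, the displayed coefficient of $e_m$ shows $Im(f)\subseteq(L_\infty)^m$; for the reverse inclusion, pick $j$ with $\gamma_j\neq0$ and use the substitution $x_i\mapsto e_1$ for $i\neq j$, $x_j\mapsto\sum_i a_ie_i$, which yields $\gamma a_1 e_m+\gamma_j\sum_{i\geq 2}a_ie_{i+m-1}$; solving for the $a_i$ realizes any prescribed element of $(L_\infty)^m$, so $Im(f)=(L_\infty)^m$. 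Finally, if $\gamma=0$ but some $\gamma_j\neq0$, the coefficient of $e_m$ vanishes identically, giving $Im(f)\subseteq(L_\infty)^{m+1}$, and the same substitution (now losing the $e_m$ term) surjects onto $(L_\infty)^{m+1}$. In every case the image is one of $\{0\}$, $(L_\infty)^m$, $(L_\infty)^{m+1}$, each a subspace.

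I expect no essential obstacle. The only point meriting care is that infinite-dimensionality introduces no convergence or boundary issues, precisely because every element of $L_\infty$ has finite support, so each evaluation is a finite sum and the truncation present in Lemma~\ref{product} simply disappears. The verification of the containments $Im(f)\subseteq(L_\infty)^m$ and $Im(f)\subseteq(L_\infty)^{m+1}$ through the general evaluation is the only step that is not completely immediate, but it reduces entirely to the identity ``coefficient of $e_m$ equals $\gamma\prod_l b_l$'' noted above.
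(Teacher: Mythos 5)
Your proposal is correct and follows essentially the same route as the paper: the paper proves this theorem simply by remarking that the arguments of Theorem~\ref{multilinear} (normal form via consequences of $x_1(x_2x_3)$, the evaluation formula, and the case analysis on $\gamma$ and the $\gamma_j$) remain valid for $L_\infty$, which is precisely what you carry out in detail. Your explicit verification that the truncation in Lemma~\ref{product} and the top-degree collapse at $m=n$ are the only places finite-dimensionality entered, and that both issues disappear for $L_\infty$, is exactly the check the paper leaves implicit.
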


\begin{theorem}
    Let $K$ be an algebraically closed field and let $L_\infty$ be a Leibniz algebra over $K$ as above. Let  $f(x_1, \dots, x_m) \in \mathcal{D}(X)$ be an arbitrary multihomogeneous polynomial of multidegree $(d_1, \dots, d_m)$ and let $d=d_1+\cdots + d_m$. Write $f$ modulo $Id(L_\infty)$ as \[f(x_1, \dots, x_m) = \sum_{j=1}^m \alpha_j x_jx_1^{(d_1)} \cdots x_{j-1}^{(d_{j-1})}x_j^{(d_j-1)}x_{j+1}^{(d_{j+1})}\cdots x_m^{(d_m)}.\] 
    \begin{enumerate}
    \item If $f$ is a polynomial identity for $L_\infty$ then the image of $f$ is $\{0\}$.
        \item If  $f$ is not a polynomial identity for $L_\infty$ and $\sum_{i=1}^m \alpha_i = 0$ then the image of $f$ on $L_\infty$ is $(L_\infty)^{d+1}$.
        \item If $f$ is not a polynomial identity for $L_\infty$ and $\sum_{i=1}^m \alpha_i \neq  0$ then
        \begin{enumerate}
            \item If there exists $j$ such that $d_j=1$ and $\alpha_j\neq0$, then the image of $f$ on $L_\infty$ is $(L_\infty)^{d}$;
            \item If $\alpha_j=0$, for all $j$ such that $d_j=1$ then the image of $f$ on $L_\infty$ is $\{0\} \cup (\dot{K}\cdot e_d +(L_\infty)^{d+1})$. 
        \end{enumerate}
    \end{enumerate}
\end{theorem}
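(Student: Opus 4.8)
The plan is to run the proof of Theorem \ref{homogeneous} essentially verbatim with $L_n$ replaced by $L_\infty$, the only genuine verification being that each ingredient of that proof survives the passage to the smaller $T$-ideal $\mathrm{Id}(L_\infty)$ and to infinite dimension. First I would justify that the stated normal form (\ref{fhomogeneous}) for $f$ is still available modulo $\mathrm{Id}(L_\infty)$. This is the one place where the difference between $L_n$ and $L_\infty$ is actually felt: by Theorem \ref{Id(L_infinite)}, $\mathrm{Id}(L_\infty)=\langle x_1(x_2x_3)\rangle^T$ is strictly smaller than $\mathrm{Id}(L_n)$, so I may \emph{not} appeal to the reordering identity (iv) of Proposition \ref{consequences} (which needs (\ref{identidadeqq3})) nor to the nilpotency consequence (i). However, to bring a left-normed multihomogeneous monomial into the form $x_j x_1^{(d_1)}\cdots x_j^{(d_j-1)}\cdots x_m^{(d_m)}$ one only needs to permute the letters occurring after the first one, and this is precisely item (iii) of Proposition \ref{consequences}, whose derivation uses only the Leibniz identity and (\ref{identidadeqq1}). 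Hence (iii) holds modulo $\mathrm{Id}(L_\infty)$, the normal form is valid, and (since the monomials described in the proof of Theorem \ref{Id(L_infinite)} form a basis of the relatively free algebra) $f$ fails to be an identity precisely when some $\alpha_j\neq 0$, which settles case (1).

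Second, I would set up the computational backbone. The analog of Lemma \ref{product} holds by the same one-line computation, now reading $zw^{(s)}=b_1^s\sum_{i\geq 1}a_i e_{i+s}$: because $L_\infty$ contains $e_{i+s}$ for every $i$, no upper truncation occurs, yet each element of $L_\infty$ has finite support so all sums remain finite. Consequently the evaluation identity (\ref{evaluation}) persists, with the finite bound $n-d+1$ replaced by an unrestricted (but effectively finite) range, and with $(L_\infty)^k=\mbox{span}\{e_k,e_{k+1},\dots\}$ in place of $(L_n)^k$.

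Third, I would carry out the case analysis exactly as in Theorem \ref{homogeneous}. For (2), the hypothesis $\sum_j\alpha_j=0$ annihilates the coefficient of $e_d$ in (\ref{evaluation}), giving $\mathrm{Im}(f)\subseteq (L_\infty)^{d+1}$; the reverse inclusion is the substitution $x_i\mapsto e_1$ for $i>1$ and $x_1\mapsto\sum_i a_i e_i$ of Lemma \ref{sumzero}, solving linearly for the $a_i$. For (3)(a), the hypothesis $d_1=1$ makes the $e_d$-coefficient linear in $a_1$, so one solves over $K$ directly, with no roots needed, and reaches all of $(L_\infty)^d$. For (3)(b), the containment $\{0\}\cup(\dot{K}\cdot e_d+(L_\infty)^{d+1})\subseteq\mathrm{Im}(f)$ is the analog of Lemma \ref{sumnonzero}, where algebraic closedness is used to extract a $d$-th root of $\beta_d/\gamma$; conversely, a nonzero image element can have vanishing $e_d$-coefficient only if some $b_j=0$, but then, since $\alpha_j\neq 0$ forces $d_j\geq 2$ under the hypothesis of (3)(b), the factor $\prod_l b_l^{d_l-1}$ annihilates the entire evaluation, so every nonzero image element has nonzero $e_d$-component.

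The point to watch, rather than a genuine obstacle, is confirming that nothing in the proof of Theorem \ref{homogeneous} used finite dimension in an essential way: the only roles played by nilpotency were to truncate the sums in Lemma \ref{product} and to bound $d\leq n$, and both vanish harmlessly in $L_\infty$ because the target basis vectors $e_{i+d-1}$ always exist while the evaluated elements still have finite support. Once the normal form is secured modulo $\mathrm{Id}(L_\infty)$, the remainder of the argument is therefore a faithful transcription of the finite-dimensional case.
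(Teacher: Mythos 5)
Your proposal is correct and takes exactly the approach the paper itself uses: the paper's ``proof'' is a one-line remark that the arguments of Theorem \ref{multilinear} and Theorem \ref{homogeneous} remain valid for $L_\infty$, and your write-up simply carries out that transcription. In fact you are more careful than the paper at the one genuinely delicate point, namely that the normal form (\ref{fhomogeneous}) survives even though $\mbox{Id}(L_\infty)$ is generated by $x_1(x_2x_3)$ alone, since the reordering identity (iii) of Proposition \ref{consequences} needs only the Leibniz identity and (\ref{identidadeqq1}), not (\ref{identidadeqq3}).
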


\section{Acknowledgements}

The authors would like to thank P.S. Fagundes for useful comments and F.Y. Yasumura for suggesting an improvement in the formula of Corollary \ref{count} and for alerting us that the identity of $L_n$ $x_1\cdots x_{n+1}=0$ is a consequence of polynomials (\ref{identidadeqq1}) and (\ref{identidadeqq3}).

\section{Funding}

This research was supported by São Paulo Research Foundation (FAPESP), grant 2018/23690-6.


\begin{thebibliography}{99}
	
	\bibitem{AM}
	A.~A. Albert and B.~Muckenhoupt.
	\newblock On matrices of trace zeros.
	\newblock {\em Michigan Math. J.}, 4:1--3, 1957.
	
	\bibitem{Anzis}
	B.~E. Anzis, Z.~M. Emrich, and K.~G. Valiveti.
	\newblock On the images of {L}ie polynomials evaluated on {L}ie algebras.
	\newblock {\em Linear Algebra Appl.}, 469:51--75, 2015.
	
	\bibitem{Omirov_Engel}
	S.~A. Ayupov and B.~A. Omirov.
	\newblock On {L}eibniz algebras.
	\newblock In {\em Algebra and operator theory ({T}ashkent, 1997)}, pages 1--12.
	Kluwer Acad. Publ., Dordrecht, 1998.
	
	\bibitem{Omirov_Null}
	S.~A. Ayupov and B.~A. Omirov.
	\newblock On some classes of nilpotent {L}eibniz algebras.
	\newblock {\em Sibirsk. Mat. Zh.}, 42(1):18--29, i, 2001.
	
	\bibitem{Bahturin}
	Y.~A. Bahturin.
	\newblock {\em Identical relations in {L}ie algebras}.
	\newblock VNU Science Press, b.v., Utrecht, 1987.
	\newblock Translated from the Russian by Bahturin.
	
	\bibitem{Barnes_Levi}
	D.~W. Barnes.
	\newblock On {L}evi's theorem for {L}eibniz algebras.
	\newblock {\em Bull. Aust. Math. Soc.}, 86(2):184--185, 2012.
	
	\bibitem{Drensky}
	V.~Drensky.
	\newblock {\em Free algebras and {PI}-algebras}.
	\newblock Springer-Verlag Singapore, Singapore, 2000.
	\newblock Graduate course in algebra.
	
	\bibitem{Fagundes}
	P.~S. Fagundes.
	\newblock The images of multilinear polynomials on strictly upper triangular
	matrices.
	\newblock {\em Linear Algebra Appl.}, 563:287--301, 2019.
	
	\bibitem{GargatedeMello}
	I.~G. Gargate and T.~C. de~Mello.
	\newblock Images of multilinear polynomials on {$n$} {$\times$} {$n$} upper
	triangular matrices over infinite fields.
	\newblock {\em Israel J. Math.}, 252(1):337--354, 2022.
	
	\bibitem{GiambrunoZaicev}
	A.~Giambruno and M.~Zaicev.
	\newblock {\em Polynomial identities and asymptotic methods}, volume 122 of
	{\em Mathematical Surveys and Monographs}.
	\newblock American Mathematical Society, Providence, RI, 2005.
	
	\bibitem{Trace0}
	W.~Kahan.
	\newblock Only commutators have trace zero.
	\newblock {\em unpublished note},
	\href{https://people.eecs.berkeley.edu/~wkahan/MathH110/trace0.pdf}{https://people.eecs.berkeley.edu/~wkahan/MathH110/trace0.pdf},
	1999.
	
	\bibitem{MalevO}
	A.~Kanel-Belov, S.~Malev, C.~Pines, and L.~Rowen.
	\newblock The images of multilinear and semihomogeneous polynomials on the
	algebra of octonions.
	\newblock {\em arXiv:2204.07139}, 2022.
	
	\bibitem{K-BMR}
	A.~Kanel-Belov, S.~Malev, and L.~Rowen.
	\newblock The images of non-commutative polynomials evaluated on {$2\times2$}
	matrices.
	\newblock {\em Proc. Amer. Math. Soc.}, 140(2):465--478, 2012.
	
	\bibitem{survey}
	A.~Kanel-Belov, S.~Malev, L.~Rowen, and R.~Yavich.
	\newblock Evaluations of noncommutative polynomials on algebras: methods and
	problems, and the {L}'vov-{K}aplansky conjecture.
	\newblock {\em SIGMA Symmetry Integrability Geom. Methods Appl.}, 16:Paper No.
	071, 61, 2020.
	
	\bibitem{Wang_nxn}
	Y.~Luo and Y.~Wang.
	\newblock On {F}agundes-{M}ello conjecture.
	\newblock {\em J. Algebra}, 592:118--152, 2022.
	
	\bibitem{MalevQ}
	S.~Malev.
	\newblock The images of noncommutative polynomials evaluated on the quaternion
	algebra.
	\newblock {\em J. Algebra Appl.}, 20(5):Paper No. 2150074, 8, 2021.
	
	\bibitem{MalevJ}
	S.~Malev, R.~Yavich, and R.~Shayer.
	\newblock Evaluations of multilinear polynomials on low rank {J}ordan algebras.
	\newblock {\em Comm. Algebra}, 50(7):2840--2845, 2022.
	
	\bibitem{LieNilpotent}
	N.~Nehra and S.~Rani.
	\newblock Image of lie polynomial of degree 2 evaluated on nilpotent lie
	algebra.
	\newblock {\em Comm. Algebra}, pages 46--62, 2023.
	
	\bibitem{Omirov_Infinite_Null}
	B.~A. Omirov.
	\newblock Thin {L}eibniz algebras.
	\newblock {\em Mat. Zametki}, 80(2):251--261, 2006.
	
	\bibitem{Shoda}
	K.~Shoda.
	\newblock Einige {S}\"{a}tze \"{u}ber {M}atrizen.
	\newblock {\em Jpn. J. Math.}, 13(3):361--365, 1937.
	
	\bibitem{VitasSurjective}
	D.~Vitas.
	\newblock Multilinear polynomials are surjective on algebras with surjective
	inner derivations.
	\newblock {\em J. Algebra}, 565:255--281, 2021.
	
	\bibitem{4Russos}
	K.~A. Zhevlakov, A.~M. Slinko, I.~P. Shestakov, and A.~I. Shirshov.
	\newblock {\em Rings that are nearly associative}, volume 104 of {\em Pure and
		Applied Mathematics}.
	\newblock Academic Press, Inc. [Harcourt Brace Jovanovich, Publishers], New
	York-London, 1982.
	\newblock Translated from the Russian by Harry F. Smith.
	
\end{thebibliography}
\end{document}